\newtheorem{thm}{Theorem}[section]
\newtheorem{cor}[thm]{Corollary}
\newtheorem{prop}[thm]{Proposition}
\newtheorem{lemma}[thm]{Lemma}
\newtheorem{example}[thm]{Example} 
\newtheorem{definition}[thm]{Definition}
\newtheorem{question}[thm]{Question}
\newtheorem{claim}{Claim}
\theoremstyle{definition}
\numberwithin{equation}{section}
\newcommand{\Q}{\mathbb{Q}}
\newcommand{\Z}{\mathbb{Z}}
\newcommand{\F}{\mathbb{F}}
\newcommand{\OK}{\mathcal{O}_K}
\newcommand{\pbinom}[2]{{#1 \choose #2}_p}
\newcommand{\disc}{\mathop{\rm disc}}
\newcommand{\col}{\mathop{\rm col}}
\newcommand{\es}[1]{\begin{equation}\begin{split}#1\end{split}\end{equation}}
\begin{document}

\baselineskip=17pt

\title[Counting Subrings of $\Z^n$]{Counting Finite Index Subrings of $\Z^n$}

\author[S. Atanasov]{Stanislav Atanasov}
\address{Department of Mathematics, Columbia University, Room 408, MC4406, 2990 Broadway, New York, NY 10027}
\email{stanislav@math.columbia.edu}

\author[N. Kaplan]{Nathan Kaplan}
\address{Department of Mathematics, University of California, Irvine, 340 Rowland Hall, Irvine, CA 92697} 
\email{nckaplan@math.uci.edu}

\author[B. Krakoff]{Benjamin Krakoff}
\address{Department of Mathematics, University of Michigan, East Hall B723, 530 Church Street, Ann Arbor, MI 48109}
\email{bkrakoff@umich.edu}

\author[J. H. Menzel]{Julia H. Menzel}
\address{Program in History, Anthropology, and Science, Technology, and Society, 77 Massachusetts Avenue, Room E51-163, MIT, Cambridge, MA 02139}
\email{julimen@mit.edu}

\date{\today}

\begin{abstract}
We count subrings of small index of $\mathbb{Z}^n$, where the addition and multiplication are defined componentwise. Let $f_n(k)$ denote the number of subrings of index $k$.  For any~$n$, we give a formula for this quantity for all integers $k$ that are not divisible by a \nth{9} power of a prime, extending a result of Liu. 
\end{abstract}

\subjclass[2010]{Primary 20E07; Secondary 11H06, 11M41}

\keywords{Subrings, Lattices, Multiplicative lattices, Zeta functions of groups and rings}

\maketitle

\section{Introduction}\label{sec1}

The main goal of this paper is to study the number of subrings of $\Z^n$ of given index.  We begin by reviewing an easier problem, counting subgroups of $\Z^n$ of given index.

\subsection{Counting Subgroups of $\Z^n$}\label{sec_counting_subgroups}

The \emph{zeta function} of an infinite group $G$ is defined by
\[
\zeta_G(s) = \sum_{H \le G \atop [G:H] < \infty} [G:H]^{-s} = \sum_{k=1}^\infty a_k(G) k^{-s},
\]
where $s$ is a complex variable and $a_k(G)$ is the number of subgroups of $G$ of index $k$.  We can think of $\zeta_G(s)$ as a generating function that gives the number of subgroups $H$ of $G$ of each finite index.  

We focus on the case $G = (\Z^n,+)$.  A finite index subgroup of $\Z^n$ is a sublattice, and every sublattice of $\Z^n$ is the column span of a unique matrix $A$ in Hermite normal form.  The index of the lattice spanned by $A$ is $\det(A)$.  Let $M_n(\Z)$ denote the set of all $n \times n$ matrices with entries in $\Z$.  We have
\[
a_k(\Z^n) = \#\{A \in M_n(\Z) \colon A \text{ is in Hermite normal form and } \det(A) = k\}.
\]

Throughout this paper, $p$ always represents a prime number and $\prod_p$ denotes a product over all primes.  The zeta function of a finitely generated nilpotent group $G$ has an Euler product \cite[Proposition 1.3]{GSS}, so we can write
\[
\zeta_{\Z^n}(s) = \prod_p \zeta_{\Z^n,p}(s),
\]
where 
\[
\zeta_{\Z^n,p}(s) = \sum_{k=0}^{\infty} a_{p^k}(\Z^n) p^{-ks}.
\] 

A matrix $A \in M_n(\Z)$ in Hermite normal form with $\det(A) = p^k$ has diagonal $(p^{i_1}, p^{i_2},\ldots, p^{i_n})$ where each $i_j \ge 0$ and $\sum_{j=1}^n i_j = k$.  It is not difficult to compute the number of $n \times n$ matrices in Hermite normal form with given diagonal. This computation implies that
\begin{equation}\label{local_factor_subgroup}
\zeta_{\Z^n,p}(s) = (1-p^{-s})^{-1} (1-p^{-(s-1)})^{-1}\cdots (1-p^{-(s-(n-1))})^{-1},
\end{equation}
and therefore
\begin{equation}\label{subgroupeqn}
\zeta_{\Z^n}(s) = \zeta(s) \zeta(s-1) \cdots \zeta(s-(n-1)).
\end{equation}
See the book of Lubotzky and Segal for five proofs of this fact \cite{LubotzkySegal}.  We review one of these arguments in Section \ref{sec_hermite}, as it forms the basis for the approach to counting subrings that we explain in Section \ref{sec_subring_matrices}.

The function in \eqref{subgroupeqn} has meromorphic continuation to the entire complex plane.  Its right-most pole is located at $s=n$ and is simple.  A standard Tauberian theorem gives an asymptotic formula for the number of sublattices of $\Z^n$ of bounded index.  We have
\es{\label{asymptotic_subgroup_count}
N_n(X)  := &\   \#\{\mbox{Sublattices of } \Z^n
\mbox{ of index} <X\}  = \sum_{k < X} a_k(\Z^n) \\
 =  &\  \frac{\zeta(n)\zeta(n-1)\cdots \zeta(2)}{n}X^n+O(X^{n-1}\log(X))
}
as $X\to\infty$.  

In Section \ref{sec_hermite} we see that for fixed $n$ and $e,\ a_{p^e}(\Z^n)$ is a polynomial in $p$ that is not difficult to compute.  In fact, $a_{p^e}(\Z^n)$ is equal to the $p$-binomial coefficient $\pbinom{n-1+e}{e}$ \cite[Chapter 1.8]{Stanley}.  Therefore, the problems of counting sublattices of $\Z^n$ of given index, and of asymptotically counting sublattices of bounded index, are well-understood.

\subsection{Counting Subrings of $\Z^n$}\label{counting_subrings}

We study the function analogous to $a_k(\Z^n)$ that counts subrings of $\Z^n$. We use the term \emph{subring} to mean a multiplicatively closed sublattice containing the multiplicative identity $(1,1,\ldots, 1)$.  Let $f_n(k)$ denote the number of subrings of $\Z^n$ of index $k$. Define the \emph{subring zeta function of $\Z^n$} by
\[
\zeta^R_{\Z^n}(s) = \sum_{k=1}^\infty f_n(k) k^{-s}.
\]
As we saw with $\zeta_{\Z^n}(s)$, this zeta function has an Euler product
\[
\zeta^R_{\Z^n}(s) = \prod_p \zeta_{\Z^n,p}^R(s),
\]
where
\[
\zeta_{\Z^n,p}^R(s) = \sum_{k=0}^\infty f_n(p^k) p^{-ks}.
\]
Equivalently, $\zeta_{\Z^n,p}^R(s)  = \zeta_{\Z_p^n}^R(s)$ where $\Z_p$ denotes the ring of $p$-adic integers and $\zeta^R_{\Z_p^n}(s)$ is the zeta function that counts finite index $\Z_p$-subalgebras of~$\Z_p^n$. 

\begin{question}
For fixed $n$ and $e$, how does $f_n(p^e)$ behave as a function~of~$p$?
\end{question} 

Liu uses a strategy similar to the one outlined in Section \ref{sec_counting_subgroups} to compute $f_n(p^e)$ for $e \le 5$ and any $n$.  (There is a small error in the computation of $f_n(p^5)$ that we correct here. More specifically, the constant terms in the coefficients of ${n \choose 6}$ and ${n \choose 7}$ are corrected to $141$ and $371$, respectively.)
\begin{prop}\cite[Proposition 1.1]{Liu}\label{Liu_fn_prop}
We have
\begin{eqnarray*}
f_n(1) & = & 1, \\
f_n(p) & = & \binom{n}{2},\\
f_n(p^2) & = & \binom{n}{2} + \binom{n}{3} + 3 \binom{n}{4},\\
f_n(p^3) & = & \binom{n}{2} + (p+1) \binom{n}{3} + 7 \binom{n}{4} + 10 \binom{n}{5} + 15 \binom{n}{6}, \\
f_n(p^4) & = & \binom{n}{2} + (3p+1) \binom{n}{3} + (p^2+p+10) \binom{n}{4} + (10p+21) \binom{n}{5} \\
& & + 70 \binom{n}{6} + 105 \binom{n}{7} + 105 \binom{n}{8},\\
f_n(p^5) & = &  {n \choose 2} + (4p + 1){n \choose 3} + (7p^2 + p + 13){n \choose 4}  \\
& & + (p^3 + p^2 + 41p + 31){n \choose 5} + (15p^2 + 35p + 141){n \choose 6}  \\
 & &  +(105p + 371){n \choose 7}+ 910{n \choose 8} + 1260{n \choose 9}+ 945{n \choose 10}.
\end{eqnarray*}
\end{prop}

The main theorem of this paper extends Proposition \ref{Liu_fn_prop} to all $e\le 8$.  For consistency with Liu's results, we state our formulas as linear combinations of binomial coefficients $\binom{n}{i}$ with coefficients that are polynomials in $p$.  This also allows one to quickly identify the main term of each function when $n$ is fixed and $p$ varies.
\begin{thm}\label{Main_Theorem}
We have 
\begin{footnotesize}
 \begin{eqnarray*}
 f_n(p^6) & = & {n \choose 2} + (p^2 + 4p + 1){n \choose 3} + (p^3 + 14p^2 + p + 16){n \choose 4} \\
 & & + (p^4+11p^3+2p^2+81p+41){n \choose 5} + (p^4 + p^3 + 131p^2 + 111p + 226){n \choose 6} \\
 & & + (21p^3 + 56p^2 + 616p + 743){n \choose 7}+ (210p^2 + 770p + 2639){n \choose 8} \\ 
 & & + (1260p + 6958){n \choose 9} + 14175{n \choose 10}+ 17325{n \choose 11} + 10395{n \choose 12},\\
 & & \\
 & & \\
f_n(p^7) & = & {n \choose 2} + (3p^2 + 4p + 1){n \choose 3} + (10p^3 + 12p^2 + p + 19){n \choose 4}\\
    & & + (15p^4+21p^3+16p^2+121p+51){n \choose 5} \\
    & & + (p^6 + p^5 + 17p^4 + 17p^3 + 392p^2 + 206p + 326){n \choose 6}\\
    & & +  (p^5 + 22p^4 + 288p^3 + 379p^2 + 1618p + 1219){n \choose 7} \\
    & & + (28p^4 + 84p^3 + 2324p^2 + 3640p + 5279){n \choose 8}\\
    & & +  (378p^3 + 1638p^2 + 11298p + 18600){n \choose 9}+(3150p^2 + 15750p + 58800){n \choose 10}\\
    & & + (17325p + 143605){n \choose 11} + 252945{n \choose 12} + 270270{n \choose 13} + 135135{n \choose 14}, 
\end{eqnarray*}
and 
\begin{eqnarray*}
f_n(p^8) & = & {n \choose 2} + (4p^2 + 4p + 1){n \choose 3} + (p^4 + 26p^3 + 9p^2 + p + 22){n \choose 4} \\
& & + (p^5 + 77p^4 -13p^3 + 52p^2 + 161p + 61){n \choose 5} \\
& & + (16p^6 + 31p^5 + 22p^4 + 187p^3 + 702p^2 + 301p + 441){n \choose 6} \\
& & + (p^8 + p^7 + 2p^6 + 23p^5 + 339p^4 + 1080p^3 + 1206p^2 + 3074p + 1800){n \choose 7}\\
& & + (29p^6 + 29p^5 + 652p^4 + 1093p^3 + 9374p^2 + 9073p + 8933){n \choose 8}\\
& & + (36p^5 + 498p^4 + 6420p^3 + 15324p^2 + 39810p + 37201){n \choose 9}\\
& & + (630p^4 + 3150p^3 + 46200p^2 + 103320p + 148551){n \choose 10}\\
& & + (6930p^3 + 41580p^2 + 243705p + 510730){n \choose 11} \\
& & + (51975p^2 + 329175p + 1474165){n \choose 12}\\
& & + (270270p + 3258255){n \choose 13} + 5045040{n \choose 14} + 4729725{n \choose 15} + 2027025{n \choose 16}.
\end{eqnarray*}
\end{footnotesize}
\end{thm}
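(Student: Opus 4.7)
The plan is to extend the Hermite-normal-form strategy used to prove Proposition \ref{Liu_fn_prop} from $e\le 5$ to $e\le 8$. Via the Euler product $\zeta^R_{\Z^n}(s)=\prod_p \zeta^R_{\Z^n,p}(s)$, it suffices to compute $f_n(p^e)$ locally. Any subring of $\Z_p^n$ containing $(1,1,\ldots,1)$ of index $p^e$ is in particular a sublattice, so it admits a unique representation as the column span of a matrix $A \in M_n(\Z_p)$ in Hermite normal form with $\det(A)=p^e$ and diagonal $(p^{i_1},\ldots,p^{i_n})$, where $(i_1,\ldots,i_n)$ is a composition of $e$ into nonnegative parts. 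Beyond the HNF constraints, the columns of $A$ must satisfy two additional requirements: $(1,1,\ldots,1)$ lies in their $\Z_p$-span, and the componentwise product of any two columns also lies in that span. These requirements translate into a system of polynomial congruences on the off-diagonal entries modulo appropriate powers of $p$.

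The enumeration is then organized by diagonal type. Fix a composition of $e$ and count the HNF matrices with that diagonal whose column span contains $(1,\ldots,1)$ and is closed under componentwise multiplication. Crucially, every subring of $\Z_p^n$ decomposes uniquely as a direct product of indecomposable subrings on disjoint subsets of coordinates together with a $\Z_p$ factor on the remaining coordinates. The \emph{essential support} of a subring is the union of the supports of its indecomposable factors, and a standard bound shows that a subring of index $p^e$ has essential support of size $m\le 2e$, with equality when every indecomposable factor is a pair-identification $a_i\equiv a_j\pmod{p}$. The number of subrings with a fixed essential-support pattern on $m$ prescribed coordinates is a polynomial in $p$ independent of $n$; the freedom in choosing which $m$ of the $n$ coordinates to use then contributes the factor $\binom{n}{m}$. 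This explains the binomial-coefficient structure in Theorem \ref{Main_Theorem} and predicts the appearance of $\binom{n}{2e}$ as the top term of each formula.

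Concretely, for each $e\in\{5,6,7,8\}$ I would enumerate all indecomposable patterns --- pairs consisting of a diagonal type $(i_1,\ldots,i_m)$ with every $i_j\ge 1$ and an off-diagonal filling yielding an indecomposable subring of $\Z_p^m$ --- and for each pattern, solve the multiplicative-closure congruences explicitly to extract the count of valid fillings as a polynomial in $p$. Summing $\binom{n}{m}$ times the total contribution of patterns of size $m$ produces $f_n(p^e)=\sum_m \binom{n}{m}P_m(p)$. For $e=5$, this recomputation must in particular yield the corrected coefficients of $\binom{n}{6}$ and $\binom{n}{7}$ noted after Proposition \ref{Liu_fn_prop}.

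The main obstacle is the combinatorial explosion for $e=7$ and $e=8$: the number of compositions of $e$ grows rapidly, and for each diagonal type the multiplicative-closure system involves several off-diagonal variables subject to nonlinear congruences modulo various powers of $p$. Careful case analysis, almost certainly aided by computer algebra, is required both to evaluate every count exactly as a polynomial in $p$ and to verify indecomposability so that patterns are neither double-counted nor missed. A secondary challenge is the bookkeeping: one must verify a stability property ensuring that the polynomial in $p$ multiplying each $\binom{n}{m}$ does not depend on $n$ once $n\ge m$, and must correctly amalgamate the many indecomposable contributions so that the resulting coefficients match those stated in the theorem.
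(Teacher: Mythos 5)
Your proposal follows essentially the same route as the paper: reduce to counting Hermite-normal-form subring matrices by diagonal type, exploit Liu's direct-sum decomposition into indecomposable (equivalently, in Liu's terminology, \emph{irreducible}) subrings, and observe that the pattern on a fixed set of $m$ coordinates contributes $\binom{n}{m}$ with a coefficient polynomial depending only on $m$. Your sum-over-essential-supports formula $f_n(p^e)=\sum_m \binom{n}{m}P_m(p)$ is a mild repackaging of Liu's recurrence (Proposition \ref{fnrecurrence}), and your bound $m\le 2e$ (giving the top term $\binom{n}{2e}$) matches the paper's use of $g_j(p^i)=0$ for $i<j-1$. So the framework is right.

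The gap is that the framework is essentially all you give: the mathematical content of Theorem \ref{Main_Theorem} is the collection of explicit polynomial evaluations of $g_\alpha(p)$ for each composition $\alpha$, and you defer all of these to unspecified ``case analysis, almost certainly aided by computer algebra.'' The paper does not get these for free. For example, after using the stripping reduction (Lemma \ref{lem2}, when a part equals $1$) and the easy family $g_{(\beta,1,\ldots,1)}(p)$, it still has to handle compositions like $(2,2,1,1)$ and $(3,2,1,1)$ by direct analysis of the multiplicative-closure congruences; the latter (Lemma \ref{lemma3211}) boils down to counting $\F_p$-points of a $2$-dimensional affine variety with $7$ irreducible components, yielding $7p^2-6p+6$ after a delicate case split on discriminants being square or non-square. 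Your step ``extract the count of valid fillings as a polynomial in $p$'' is precisely what requires a proof, not an assumption: polynomiality of these counts is exactly what is at stake (cf.\ Questions \ref{zetaznuniform} and \ref{fnpolyq}), and the paper emphasizes that for larger $e$ the relevant varieties could fail to have polynomial point counts. Until you carry out the composition-by-composition analysis for $e=6,7,8$ (paralleling Lemmas \ref{uniquenesslemma} and \ref{lemma3211} and the accompanying data tables), you have a plan, not a proof.
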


\subsection{Motivation: Counting Subrings and Orders of Bounded Index}

Bhargava has asked about the asymptotic growth rate of $f_n(k)$ \cite{Liu}.  We would like to have an asymptotic formula for the number of subrings of $\Z^n$ analogous to \eqref{asymptotic_subgroup_count}.  Expressions for $\zeta^R_{\Z^n}(s)$ analogous to \eqref{subgroupeqn} would lead to such results.  However, such formulas are only known for $n \le 4$.
\begin{thm}\label{GeneratingFunctions}
We have 
\begin{eqnarray*}
\zeta_{\Z^2}^R(s) & = & \zeta(s), \\
\zeta_{\Z^3}^R(s) & = & \frac{\zeta(3s-1) \zeta(s)^3}{\zeta(2s)^2}, \\
\zeta_{\Z^4}^R(s) & = & \prod_p
\frac{1}{(1-p^{-s})^2(1-p^2 p^{-4s})(1-p^3 p^{-6s})} \Big(1+4 p^{-s}+2 p^{-2s}\\
& & +(4p-3) p^{-3s}+(5p-1)p^{-4s}  +(p^2-5p)p^{-5s}+(3p^2-4p) p^{-6s} \\ 
& & -2p^2 p^{-7s}-4p^2 p^{-8s} - p^2 p^{-9s}{\Big)}.
\end{eqnarray*}
\end{thm}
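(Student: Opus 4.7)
The three cases are handled separately, in increasing order of difficulty, all within the Hermite-normal-form (HNF) matrix framework that will be set up in Section \ref{sec_subring_matrices}.

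For $n=2$ the argument is immediate. Every finite-index subring $R \subseteq \Z^2$ containing $(1,1)$ admits a basis of the form $\{(1,1),(0,k)\}$ for a unique $k \ge 1$, and closure under componentwise multiplication is automatic because $(0,k)^2 = k\cdot(0,k)$. Hence $f_2(k)=1$ for all $k \ge 1$, so $\zeta_{\Z^2}^R(s) = \sum_{k \ge 1} k^{-s} = \zeta(s)$.

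For $n=3$ I would work locally at a prime $p$. Each subring of $\Z^3$ of index $p^e$ has an HNF basis of the form $\{(1,1,1),\,v_2,\,v_3\}$ with $v_2=(0,p^a,c)$, $v_3=(0,0,p^b)$, $a+b=e$, and $0 \le c < p^b$. Expanding $v_2 v_3$, $v_3^2$, and $v_2^2$ shows that the first two lie in the span for free, while the third reduces to the single congruence $p^b \mid c(c-p^a)$. Counting the admissible $c \pmod{p^b}$ by splitting according to $\min(v_p(c),v_p(c-p^a))$ and then summing the resulting double geometric series over $(a,b)$ with $a+b=e$ gives the closed-form local factor
\[
\zeta_{\Z^3,p}^R(s) = \frac{(1-p^{-2s})^2}{(1-p^{-s})^3(1-p^{1-3s})},
\]
whose Euler product is the claimed $\zeta(3s-1)\zeta(s)^3/\zeta(2s)^2$.

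For $n=4$ the HNF basis takes the same shape, now with diagonal $(1,p^{a_2},p^{a_3},p^{a_4})$, $a_2+a_3+a_4=e$, and three free strictly-lower-triangular entries $c_1,c_2,c_3$ (each subject to the standard HNF size restrictions). Of the six products $v_iv_j$ with $2 \le i \le j \le 4$, three lie in the span automatically; the remaining three give the coupled congruences $p^{a_3}\mid c_1(c_1-p^{a_2})$, $p^{a_4}\mid c_3(c_3-p^{a_3})$, and a more intricate condition relating $c_2$, $c_3$, and the quotient $c_1(c_1-p^{a_2})/p^{a_3}$. The main obstacle is the case analysis: the constraint on $c_2$ depends sensitively on the $p$-adic valuation of $c_1(c_1-p^{a_2})$ and on that of $c_3$, so the count must be split into a moderate number of cases indexed by these valuations. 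In each case the count reduces to a finite geometric sum, and reassembling the pieces yields the Euler factor displayed in the statement. This recovers Nakagawa's formula and is the $n=4$ specialization of the method Section \ref{sec_subring_matrices} will deploy to establish Theorem \ref{Main_Theorem}.
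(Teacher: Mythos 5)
The paper itself does not prove Theorem~\ref{GeneratingFunctions}; it attributes $n=2$ to an elementary check, $n=3$ to Datskovsky--Wright, $n=4$ to Nakagawa, and cites Liu for combinatorial (HNF-matrix) proofs of all three. Your proposal is a reconstruction of Liu's combinatorial approach, which is a legitimate route. The $n=2$ argument is complete and correct. The $n=3$ sketch is also essentially right: the only nontrivial closure condition is $p^{b}\mid c(c-p^{a})$, and the count of admissible $c$ per diagonal $(a,b)$ does reassemble into the stated local factor $\frac{(1-p^{-2s})^{2}}{(1-p^{-s})^{3}(1-p^{1-3s})}$, matching $\zeta(3s-1)\zeta(s)^{3}/\zeta(2s)^{2}$. (One caveat: splitting on $\min(v_p(c),v_p(c-p^{a}))$ alone does not cleanly separate the cases; you also need to compare $v_p(c)$ against $a$, since when $v_p(c)=a$ the valuation $v_p(c-p^{a})$ can jump. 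This is fixable but should be stated.)

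The $n=4$ case, however, is a genuine gap. You correctly identify that three of the six products $v_iv_j$ give nontrivial congruences and that the constraint on $c_2$ is coupled to the valuations of $c_1(c_1-p^{a_2})$ and of $c_3$, but then you assert that ``in each case the count reduces to a finite geometric sum'' and that ``reassembling the pieces yields the Euler factor.'' That reassembly \emph{is} the theorem for $n=4$: the Euler factor displayed has a ten-term numerator with coefficients depending on $p$ in a nonobvious way, and extracting it requires carrying out the full valuation case analysis and then summing the resulting multivariable geometric series over all compositions $(a_2,a_3,a_4)$ --- this is the content of Nakagawa's paper (and, in the HNF framework, of Liu's Section 6). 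Without actually performing that computation, the $n=4$ part of your argument is a description of a strategy, not a proof. To close the gap you would need to enumerate the cases explicitly (as Liu does for the analogous but smaller $n=3$ count), verify that each produces a rational function of $p$ and $p^{-s}$, and confirm that their sum equals the stated numerator over $(1-p^{-s})^{2}(1-p^{2}p^{-4s})(1-p^{3}p^{-6s})$.
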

\noindent The computation for $n=2$ is elementary.  The $n=3$ result is originally due to Datskovsky and Wright \cite{DatskovskyWright}, and for $n =4$ it is a result of Nakagawa \cite{Nakagawa}.  Liu gives combinatorial proofs of these results \cite{Liu}; his $A_n(p,p^{-s})$ is $\zeta_{\Z^n,p}^R(s)$.

Kaplan, Marcinek, and Takloo-Bighash study the problem of counting subrings of bounded index in $\Z^n$ and prove the following.
\begin{thm}\cite[Theorem 6]{KMTB}\label{KMTB_Theorem}
Let 
\[
N^R_n(X)  :=   \#\{\mbox{Subrings of } \Z^n
\mbox{ of index less than } X\}  = \sum_{k < X} f_n(k).
\]
\begin{enumerate}
\item Let $n \le 5$.  There is a positive real number $C_n$ such that
\[
N^R_n(X) \sim C_n X (\log X)^{\binom{n}{2}-1}
\]
as $X \to \infty$.

\item Suppose $n \ge 6$.  Then for any $\epsilon > 0$ we have 
\[
X (\log X)^{\binom{n}{2} -1} \ll N^R_n(X) \ll_{\epsilon} X^{\frac{n}{2}-\frac{7}{6} + \epsilon}.
\] 

\end{enumerate}
\end{thm}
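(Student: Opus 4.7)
The plan is to deduce the growth rate of $N^R_n(X)$ from the analytic behavior of $\zeta^R_{\Z^n}(s)$ near $s = 1$, supplemented by direct Hermite-normal-form counting for the upper bound in part (2). The key preliminary step is to pin down the order of the pole at $s = 1$. Using Proposition \ref{Liu_fn_prop}, the local Euler factor satisfies $\zeta^R_{\Z^n,p}(s) = 1 + \binom{n}{2} p^{-s} + O(p^{-2\real(s)})$ uniformly in $p$, so formally
\[
\zeta^R_{\Z^n}(s) = \zeta(s)^{\binom{n}{2}} \cdot G_n(s),
\]
where $G_n(s) = \prod_p (1 - p^{-s})^{\binom{n}{2}} \zeta^R_{\Z^n,p}(s)$ is an Euler product converging absolutely on $\real(s) > 1/2$. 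This shows $\zeta^R_{\Z^n}(s)$ has a pole of order exactly $\binom{n}{2}$ at $s = 1$, which is its rightmost singularity.

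For part (1), I would apply a Delange--Ikehara type Tauberian theorem. This requires meromorphic continuation of $\zeta^R_{\Z^n}(s)$ slightly past $\real(s) = 1$ with no other poles on that line. For $n = 2, 3, 4$, the required continuation is immediate from Theorem \ref{GeneratingFunctions}. For $n = 5$, I would compute $\zeta^R_{\Z^5,p}(s)$ as an explicit rational function in $p$ and $p^{-s}$ (by enumerating Hermite-normal-form subring matrices as in Liu's method used for Proposition \ref{Liu_fn_prop}), then factor out the appropriate products of $\zeta(s - j)$ and $\zeta(ms - j)$ so that the remaining Euler product continues to $\real(s) > 1 - \delta$ for some $\delta > 0$. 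The Tauberian theorem then yields the asymptotic with
\[
C_n = \frac{1}{(\binom{n}{2} - 1)!} \lim_{s \to 1} (s-1)^{\binom{n}{2}} \zeta^R_{\Z^n}(s).
\]

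For part (2), the lower bound is essentially free: Landau's theorem for Dirichlet series with nonnegative coefficients, applied using only the pole order at $s = 1$ already established, gives $N^R_n(X) \gg X(\log X)^{\binom{n}{2} - 1}$ without any need for continuation or a zero-free region. The upper bound $N^R_n(X) \ll_\epsilon X^{n/2 - 7/6 + \epsilon}$ is the main obstacle and cannot come from the Euler product alone. Here the plan is to use the Hermite-normal-form parametrization of Section \ref{sec_subring_matrices}: a subring of index $k \le X$ is the column span of a matrix $A$ in Hermite normal form with $\det A = k$, whose entries must satisfy $\binom{n+1}{2}$ quadratic constraints expressing closure under componentwise multiplication. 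Summing over diagonal types $(p^{i_1}, \ldots, p^{i_n})$ with $\sum i_j = v_p(k)$, one bounds the number of admissible off-diagonal tuples by a volume estimate of Ellenberg--Venkatesh type on orders in the étale $\Q$-algebra $\Q^n$, using that any subring of $\Z^n$ of index $k$ has discriminant $k^2$.

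The main obstacle is this upper bound. The delicate point is the balancing of two competing estimates across diagonal types: the naive Hermite count is $X^{n/2}$, and the saving to $n/2 - 7/6$ comes from bounding the number of lattice points on the affine variety cut out by the multiplicative relations in the off-diagonal coordinates. Producing an effective volume bound uniform in the diagonal type, and optimizing the exponent across diagonal profiles, is the technical core of the proof and is what pins down $7/6$ as the correct saving over $X^{n/2}$.
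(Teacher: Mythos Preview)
This theorem is not proved in the present paper; it is quoted from \cite{KMTB}, and the surrounding discussion explicitly explains how the proof there proceeds. So there is no proof in the paper to compare against, but the paper does describe the method, and your proposal diverges from it in two places that matter.

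First, for $n=5$ you propose to ``compute $\zeta^R_{\Z^5,p}(s)$ as an explicit rational function in $p$ and $p^{-s}$.'' The paper states directly that such formulas are only known for $n\le 4$ (see the sentence following Theorem~\ref{GeneratingFunctions}), and that the authors of \cite{KMTB} obtain the $n=5$ asymptotic \emph{without} such a formula: ``The main idea is to try to locate the right-most pole of $\zeta_{\Z^n}^R(s)$ by computing $f_n(p^e)$ exactly for small $e$ and giving estimates for larger $e$.'' So your plan for $n=5$ assumes as a routine step something that is currently out of reach; the actual argument uses only the first few coefficients (as in Proposition~\ref{Liu_fn_prop}) together with uniform upper bounds on the tail $\sum_{e\ge E} f_5(p^e)p^{-es}$.

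Second, your assertion that $G_n(s)=\prod_p (1-p^{-s})^{\binom{n}{2}}\zeta^R_{\Z^n,p}(s)$ converges absolutely on $\real(s)>1/2$ is false for $n\ge 6$. Multiplying by $(1-p^{-s})^{\binom{n}{2}}$ only cancels the $p^{-s}$ term of the local factor; it does nothing to control the higher coefficients $f_n(p^e)$, which for large $n$ are polynomials in $p$ of degree exceeding $e$ (already $f_7(p^8)$ contains a $p^8$ term). Hence the local series $\zeta^R_{\Z^n,p}(s)$ itself need not converge at $\real(s)$ near $1$, and the rightmost singularity of $\zeta^R_{\Z^n}(s)$ is not at $s=1$ in general. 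This is precisely why part~(2) only gives upper and lower bounds with different exponents: the abscissa of convergence is genuinely unknown for $n\ge 6$. Your lower bound survives (restrict to squarefree indices, where $f_n(k)=\binom{n}{2}^{\omega(k)}$, and sum), but the reasoning you gave for it does not.
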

The authors of \cite{KMTB} derive the asymptotic order of growth for $N^R_5(X)$ up to a constant factor, despite not having a formula for $\zeta^R_{\Z^5}(s)$ analogous to those of Theorem \ref{GeneratingFunctions}.  The main idea is to find the location and order right-most pole of $\zeta_{\Z^n}^R(s)$ by computing $f_n(p^e)$ exactly for small $e$ and giving estimates for larger $e$.  A major motivation for the computations of this paper is to try to prove stronger versions of Theorem \ref{KMTB_Theorem}.  For $n \ge 6$ we do not even know of a conjecture for the asymptotic growth rate of $N_n^R(X)$.

One of the main problems in the field of arithmetic statistics is to count finite extensions of a number field and the orders that they contain.  For example, it is an old conjecture that the number of isomorphism classes of degree $n$ extensions $K$ of $\Q$ with $|\disc(K)| < X$ is asymptotic to a constant depending on $n$ times $X$.  One can also ask for the number of isomorphism classes of orders contained in these fields with discriminant at most $X$ in absolute value.  Bhargava has proven breakthrough results counting quartic and quintic fields by first counting all isomorphism classes of orders in these fields and then sieving for the maximal ones \cite{Bhargava4, Bhargava5}.  

Bhargava, Malle, and others have made conjectures for counting finite extensions with bounded discriminant and specified Galois group \cite{BhargavaIMRN, Malle1, Malle2}.  Problems about counting orders contained in field extensions of given degree with bounded discriminant have received less attention.  Recall that if $K$ is a number field with ring of integers $\OK$, an order $\mathcal{O} \subseteq \OK$ is a subring of $\OK$ with identity that is a $\Z$-module of rank $n$.  If $\mathcal{O} \subseteq \OK$ is an order, then $\disc(\mathcal{O}) = [\OK:\mathcal{O}]^2\cdot \disc(\OK)$.
\begin{question}
Let $B_n(X)$ denote the number of isomorphism classes of orders $\mathcal{O}$ in all degree $n$ number fields such that $|\disc(\mathcal{O})| < X$.  How does $B_n(X)$ grow as a function of $X$?
\end{question}
It follows from work of Davenport and Heilbronn for $n = 3$ \cite{DavenportHeilbronn}, and Bhargava for $n = 4,5$ \cite{Bhargava4, Bhargava5}, that $B_n(X)$ is asymptotic to a constant $c_n$ times $X$.  For $n \ge 6$ we do not know of a conjecture for the asymptotic growth rate of this function.  

One approach to this problem is to count orders contained in a fixed field $K$ of bounded index, and take a sum over all $K$ of fixed degree.  
\begin{question}\label{count_orders}
Let $K$ be a number field and let $N_K(X)$ denote the number of isomorphism classes of orders $\mathcal{O}$ contained in $K$ such that $|\disc(\mathcal{O})| < X$.  How does $N_K(X)$ grow as a function of $X$? 
\end{question}

Kaplan, Marcinek, and Takloo-Bighash study Question \ref{count_orders} by investigating analytic properties of the subring zeta function of $\OK$.  Let 
\[
\zeta_{\OK}^R(s) = \sum_{\mathcal{O} \subseteq \OK} [\OK:\mathcal{O}]^{-s}
\]
where the sum is taken over all orders in $\OK$.

In the statement of the theorem below, $n = [K:\Q]$ and $r_2$ is an explicitly computable positive integer that depends on the Galois group of the normal closure of $K/\Q$; see \cite{KMTB} for details.
\begin{thm}\cite[Theorem 2]{KMTB}\label{KMTB_Theorem_Orders}
\begin{enumerate}
\item For $n \le 5$, there is a constant $C_K >0$ such that 
\[
N_K(X) \sim C_K X^{1/2} (\log X)^{r_2 -1},
\]
as $X \to \infty$,

\item For any $n \ge 6$ and any $\epsilon > 0$, 
\[
X^{1/2} (\log X)^{r_2 -1} \ll N_K(X) \ll_\epsilon X^{\frac{n}{4} - \frac{7}{12} + \epsilon}.
\]
\end{enumerate}
\end{thm}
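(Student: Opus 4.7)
The first step is to translate the problem from discriminants to indices. Since $\disc(\mathcal{O}) = [\OK:\mathcal{O}]^2 \disc(\OK)$, we have $N_K(X) = M_K(Y)$, where $M_K(Y)$ counts subrings of $\OK$ of index at most $Y = (X/|\disc(\OK)|)^{1/2}$. The factor of $X^{1/2}$ in every bound originates here. The generating function $\zeta_{\OK}^R(s) = \sum_{\mathcal{O} \subseteq \OK} [\OK:\mathcal{O}]^{-s}$ is a Dirichlet series whose analytic behavior near its rightmost pole controls the asymptotics of $M_K(Y)$ via a Tauberian theorem.

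Next, one exploits the Euler product $\zeta_{\OK}^R(s) = \prod_p \zeta_{\OK \otimes \Z_p}^R(s)$. For each rational prime $p$, the completion $\OK \otimes \Z_p$ decomposes as a product $\prod_{\mathfrak{p}|p} \mathcal{O}_\mathfrak{p}$ of complete DVRs, and the local factor counts $\Z_p$-orders in this product. When $p$ is unramified and splits completely, this is exactly the local factor $\zeta_{\Z_p^n}^R(s)$ appearing earlier in the paper; for other unramified splitting types one obtains an analogous expression in which $p$ is replaced by $p^{f_\mathfrak{p}}$ inside each factor. The rightmost pole of each local factor sits at $s=1$ with order determined by the decomposition type, and a Chebotarev density argument on the Galois group of the normal closure of $K/\Q$ aggregates these local pole orders into a pole of $\zeta_{\OK}^R(s)$ at $s = 1$ whose order is exactly the integer $r_2$ of the theorem.

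For part (1), $n \le 5$, Theorem \ref{GeneratingFunctions} together with its analogues for each splitting type gives closed-form local factors, from which one verifies that $\zeta_{\OK}^R(s)$ continues meromorphically slightly past $\real(s) = 1$, with a unique pole of order $r_2$ on the line $\real(s) = 1$ and adequate control to the left. A Delange--Ikehara-type Tauberian theorem then yields $M_K(Y) \sim C_K' Y (\log Y)^{r_2 - 1}$, and substituting $Y = (X/|\disc(\OK)|)^{1/2}$ produces the stated asymptotic for $N_K(X)$. For part (2), $n \ge 6$, the lower bound follows either by isolating unramified primes whose local factor is already known to contribute a pole of order at least $r_2$ at $s = 1$, or equivalently by exhibiting explicit infinite families of orders. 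The upper bound is obtained from uniform polynomial bounds of the shape $f_n(p^e) \ll_\epsilon p^{(n/2 - 7/6 + \epsilon)e}$ propagated through the Euler product and a Perron-style contour argument; the exponent $n/4 - 7/12 + \epsilon$ is precisely the square root of the exponent $n/2 - 7/6 + \epsilon$ in Theorem \ref{KMTB_Theorem}, reflecting the change of variable $X = Y^2$.

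The central obstacle, and the reason part (2) delivers only bounds, is that for $n \ge 6$ we do not know the rightmost pole of $\zeta_{\Z_p^n}^R(s)$ nor even have a conjectural value, so a Tauberian theorem cannot pin down the true growth rate of $M_K(Y)$. Computing $f_n(p^e)$ for larger $e$, as done in the main theorem of the present paper, sharpens the known lower bound on the order of this pole and is the key input needed to eventually upgrade part (2) to an honest asymptotic.
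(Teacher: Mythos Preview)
This theorem is not proved in the present paper; it is quoted from \cite{KMTB} as motivation, so there is no proof here to compare your sketch against.

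That said, one point in your outline is inaccurate. For $n=5$ there is no closed-form expression for the local factors analogous to those in Theorem~\ref{GeneratingFunctions}, either for $\Z^5$ or for $\OK$ with $[K:\Q]=5$. As the paper remarks just after Theorem~\ref{KMTB_Theorem}, the $n=5$ case in \cite{KMTB} is handled not via explicit rational functions for $\zeta^R_{\Z^5,p}(s)$ but by computing $f_5(p^e)$ exactly for small $e$ and estimating it for larger $e$, which suffices to locate the rightmost pole at $s=1$ and pin down its order without ever writing down the full local factor. The same style of argument is what drives the quintic case of Theorem~\ref{KMTB_Theorem_Orders}. Your description of the $n\le 4$ cases (via Datskovsky--Wright and Nakagawa), the discriminant-to-index translation, the Euler-product reduction, and the Tauberian/Perron framework for the upper and lower bounds are otherwise the right ingredients.
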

\noindent When $[K:\Q] \ge 6$, we do not know of a conjecture for the asymptotic growth rate of $N_K(X)$.

The subring zeta function of $\OK$ has an Euler product indexed by rational primes $p$, and its local factors satisfy $\zeta^R_{\mathcal{O}_K,p}(s) = \zeta^R_{\mathcal{O}_K \otimes \Z_p}(s)$, where this zeta function counts finite index $\Z_p$-subalgebras.  When $p$ splits completely in $\OK,\ \mathcal{O}_K \otimes \Z_p \cong \Z_p^n$, so 
\[
\zeta_{\mathcal{O}_K,p}^R(s) = \zeta_{\Z_p^n}^R(s) =  \zeta_{\Z^n,p}^R(s).
\]
The $n =3$ case of Theorem \ref{KMTB_Theorem_Orders} follows from work of Datskovsky and Wright \cite{DatskovskyWright}, who compute $\zeta_{\OK}^R(s)$ for any cubic field $K$.  The $n=4$ case follows from Nakagawa's computation for any quartic field $K$ of $\zeta^R_{\OK,p}(s)$ at all unramified primes $p$ \cite{Nakagawa}.  

The authors of \cite{KMTB} suggest that among all unramified primes, those that split completely may control the asymptotic growth rate of $N_K(X)$.  This suggests that the growth rate of the simpler function $N_n^R(X)$ along with the Galois group of the normal closure of $K$ may determine the growth rate of $N_K(X)$. For more information, see the discussion following  \cite[Theorem 4]{KMTB}.  We hope that more precise results on the growth of $f_n(p^e)$ will lead not only to improved asymptotic estimates for counting subrings of $\Z^n$ of bounded index, like those of Theorem \ref{KMTB_Theorem}, but may also help to understand asymptotic formulas for counting orders of bounded index in a fixed number field, like those of Theorem \ref{KMTB_Theorem_Orders}.  

In Section \ref{sec_lower_bounds} we give lower bounds for $f_n(p^e)$ that are analogous to lower bounds for $N_K(X)$ due to Brakenhoff \cite{Brakenhoff}, suggesting a closer connection between these counting problems.

\subsection{Motivation: Uniformity of Subring Zeta Functions}

Zeta functions of infinite groups, rings, and algebras have been studied extensively from both a combinatorial and analytic point of view \cite{duSautoyWoodward, Rossmann2, Rossmann3, Voll}.  A common question is how local factors of zeta functions vary with $p$. 
\begin{definition}\label{uniform_def}
A zeta function $\zeta_G(s) = \prod_p \zeta_{G,p}(s)$ is \emph{finitely uniform} if there are finitely many rational functions $W_i(X,Y) \in \Q(X,Y),\ i \in I$, a finite index set, such that for every prime $p$ there exists an $i=i(p)$ such that $\zeta_{G,p}(s) = W_i(p,p^{-s})$.  We say that $\zeta_G(s)$ is \emph{uniform} if it is finitely uniform for $|I| = 1$.
\end{definition}
\noindent Definition \ref{uniform_def} can be extended in the obvious way to subring zeta functions.

\begin{example}\label{local_factor_ex}
\begin{enumerate}[wide, labelwidth=!, labelindent=0pt]  
\item In \eqref{local_factor_subgroup} we saw that $\zeta_{\Z^n,p}(s)$ is given by a single rational function in $p$ and $p^{-s}$.  That is, $\zeta_{\Z^n}(s)$ is uniform.  
\item When $K$ is a number field of degree at most $4$ with ring of integers $\OK,\ \zeta^R_{\mathcal{O}_K}(s)$ is finitely uniform.  For $[K\colon \Q] = 3,4,\ \zeta^R_{\mathcal{O}_K,p}(s)$ depends on the decomposition of the ideal generated by $p$ in $\OK$ \cite{DatskovskyWright,Nakagawa}.
\end{enumerate}
\end{example}

In order to understand how $f_n(p^e)$ varies with $p$, we want to know how the local factors $\zeta_{\Z^n,p}^R(s)$ vary.  Grunewald, Segal, and Smith build on work of Denef \cite{Denef1,Denef2}, and Igusa \cite{Igusa}, to prove the following result.
\begin{thm}\label{GSSthm}\cite[Theorem 3.5]{GSS}
For each positive integer $n$ and each prime $p$ there exist polynomials $\Phi_{n,p}(X), \Psi_{n,p}(X) \in \Z[x]$ such that
\[
\zeta_{\Z^n,p}^R(s) = \frac{\Phi_{n,p}(p^{-s})}{\Psi_{n,p}(p^{-s})}.
\]
Moreover, the degrees of $\Phi_{n,p}$ and $\Psi_{n,p}$ are bounded independently of $p$.
\end{thm}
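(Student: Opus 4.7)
The plan is to interpret $\zeta^R_{\Z^n,p}(s)$ as a $p$-adic integral of the type covered by Denef's rationality theorem, and then invoke its uniform-in-$p$ refinement for the degree bound. Finite index subrings of $\Z_p^n$ correspond bijectively to $\GL_n(\Z_p)$-orbits of matrices $A \in M_n(\Z_p) \cap \GL_n(\Q_p)$ whose column $\Z_p$-span contains the identity $(1,1,\ldots,1)$ and is closed under componentwise multiplication. Picking Hermite-normal-form representatives, or equivalently (as in \cite{GSS}) unfolding to an integral over all bases with an explicit $\GL_n(\Z_p)$-Jacobian, one obtains a formula of the shape
\[
\zeta^R_{\Z^n,p}(s) \;=\; \frac{1}{c_n(p)} \int_{V_n} |\det A|_p^{\,s-n}\, d\mu(A),
\]
where $\mu$ is additive Haar measure on $M_n(\Z_p)$, $c_n(p)$ is an explicit normalization constant, and $V_n \subseteq M_n(\Z_p)$ is the set of matrices $A$ whose columns satisfy the multiplicative-closure conditions. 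The essential observation is that $V_n$ is a $p$-adic \emph{semialgebraic} set: by Cramer's rule, the condition $v_i \cdot v_j \in A\Z_p^n$ (with $v_k$ the $k$-th column) is equivalent to divisibility of certain polynomial expressions in the entries of $A$ by $\det A$, i.e.\ to a Boolean combination of inequalities $|P(A)|_p \le |Q(A)|_p$ with $P,Q \in \Z[x_{ij}]$ depending only on $n$.

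Denef's rationality theorem \cite{Denef1, Denef2}, building on Igusa \cite{Igusa}, then asserts that any integral $\int_X |f|_p^s\, d\mu$ over a semialgebraic set $X \subseteq \Q_p^N$ with $f$ a polynomial is a rational function of $p^{-s}$ with rational coefficients. Applied to the integral above, and clearing denominators, this yields polynomials $\Phi_{n,p}, \Psi_{n,p} \in \Z[x]$ satisfying the first assertion of the theorem.

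For the uniform degree bound, the key point is that the first-order formula cutting out $V_n$ (in the Macintyre language of valued fields) depends only on $n$, not on the prime $p$. Applying $p$-adic cell decomposition to this $p$-independent formula produces a partition of $V_n$ into finitely many cells whose combinatorial complexity is bounded by a function of $n$ alone; on each cell the integral evaluates explicitly to a rational function in $p$ and $p^{-s}$ whose numerator and denominator have degree bounded by that complexity. Summing the contributions over all cells yields degree bounds for $\Phi_{n,p}$ and $\Psi_{n,p}$ depending only on $n$, as desired.

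The main obstacle is Step~1: the passage from the counting problem to a clean $p$-adic integral. One must carefully track the $\GL_n(\Z_p)$-action on bases, verify the corresponding Jacobian factor, and confirm that the semialgebraic description of $V_n$ (in particular the encoding of multiplicative closure via Cramer's rule) really does have complexity independent of $p$; this is where the bulk of the work in \cite{GSS} sits. Once the integral representation is in place, Denef's machinery delivers both rationality and the uniform degree bound essentially for free.
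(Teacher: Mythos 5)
The paper does not prove this result---it states it as a citation to \cite[Theorem 3.5]{GSS}, adding only that Grunewald, Segal, and Smith build on work of Denef and Igusa. Your sketch correctly captures the strategy of that cited proof: represent $\zeta^R_{\Z^n,p}(s)$ as a $p$-adic integral over a definable subset of $M_n(\Z_p)$ cut out by a first-order formula independent of $p$ (the multiplicative closure and identity conditions, encoded via Cramer's rule as valuation inequalities), invoke Denef's rationality theorem for the rationality in $p^{-s}$, and derive the uniform degree bounds from the $p$-independence of the defining formula---so your proposal follows essentially the same route as the reference on which the paper relies.
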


When $n$ is fixed, we want to understand how these rational functions vary with $p$.
\begin{question}[Question 3.7, \cite{VollRecent}]\label{zetaznuniform}
Is the zeta function $\zeta_{\Z^n}^R(s)$ uniform?  Is it finitely uniform?
\end{question}

Expanding the rational functions of Theorem \ref{GSSthm} as power series and computing individual coefficients shows that $\zeta_{\Z^n}^R(s)$ is uniform if and only if for each fixed $e\ge 1,\ f_n(p^e)$ is a polynomial in~$p$.  Therefore, the first part of Question \ref{zetaznuniform} is equivalent to the following question.

\begin{question}\label{fnpolyq}
For fixed $n \ge 2$ and $e \ge 1$, is $f_n(p^e)$ a polynomial in $p$?
\end{question}

Proposition \ref{Liu_fn_prop} shows that for for fixed $n$ and $e\le 5,\ f_n(p^e)$ is a polynomial in $p$. Theorem \ref{Main_Theorem} extends this to $e \le 8$ and provides some evidence for a positive answer to Questions \ref{zetaznuniform} and \ref{fnpolyq}.  We will see how the proof of Theorem \ref{Main_Theorem} involves counting $\F_p$-points on certain varieties.  We will see that the variety $V$ in $5$-dimensional affine space over $\F_p$ defined by
\[
(x^2-x) - (u^2 - u) c'  =(y^2-y) - (v^2 - v) c' = xy -uv c'  = 0
\]
plays an important role in the proof of Lemma \ref{lemma3211}.  A computation in Magma shows that $V$ is $2$-dimensional, with $7$ irreducible components, and suggests that $\#V(\F_p) = 7p^2 - 6p+6$.  We verify this formula in the proof of Lemma \ref{lemma3211}. It is not difficult to imagine how for larger values of $e$, more complicated varieties may play a role in the formula for $f_n(p^e)$.

\subsection{Outline of the Paper}

In Section \ref{sec2} we follow the method of Liu and give a bijection between subrings of $\Z^n$ and matrices of a specific form.  This transforms the question of counting subrings of prime power index into a problem of counting matrices with entries satisfying certain divisibility conditions.  We then review Liu's notion of irreducible subrings. Let $g_n(k)$ be the number of irreducible subrings of $\Z^{n}$ of index $k$. We recall a recurrence due to Liu relating $g_n(k)$ and $f_n(k)$ in Proposition \ref{fnrecurrence}.  Our $g_n(k)$ is denoted by $g_{n-1}(k)$ in \cite{Liu}.

In Section \ref{sec3} we express $g_n(p^e)$ as a sum over irreducible subrings with fixed diagonal entries.   Possible diagonals are in bijection with compositions of $e$ into $n-1$ parts.  We verify that for $n \le 9$ and $e \le 8$ each of the functions counting irreducible subring matrices with a fixed diagonal is given by a polynomial in $p$.  We use these results along with the recurrence of \cite{Liu}  to compute $f_n(p^e)$ for $e \leq 8$, proving Theorem \ref{Main_Theorem}.  In Section \ref{sec_lower_bounds} we give lower bounds for $g_n(p^e)$ analogous to results of Brakenhoff for orders in a fixed number field \cite{Brakenhoff}.  We end with questions for further study.

Computations related to this project were carried out in Sage, Magma, and Mathematica \cite{Magma,Sagemath}.  Some programs are available at the second author's website: {\small\url{https://www.math.uci.edu/~nckaplan/research_files/subrings}}.

\section{Subring Matrices and Irreducible Subring Matrices}
\label{sec2}
In our analysis of $f_{n}(k)$ we employ techniques developed in \cite{Liu}, where Liu gives a bijection between subrings $\mathbb{Z}^n$ and a class of integer matrices. This reduces the problem of counting subrings of index $k$ in $\Z^n$ to the problem of counting \emph{subring matrices}, which can be understood as compositions of yet simpler \emph{irreducible subring matrices}. 

\subsection{Counting Matrices in Hermite Normal Form}\label{sec_hermite}

We begin by giving a proof of \eqref{local_factor_subgroup} due to Bushnell and Reiner \cite[Section 15.2]{LubotzkySegal}.
\begin{definition}
A matrix $A \in M_n(\Z)$ with entries $a_{ij}$ is in \emph{Hermite normal form} if:
\begin{enumerate}
\item $A$ is upper triangular, and
\item $0\leq a_{ij} < a_{ii}$ for $1\leq i < j \leq n$.
\end{enumerate}
\end{definition}

There is a bijection between sublattices of $\Z^n$ of index $p^k$ and matrices $A\in M_n(\Z)$ in Hermite normal form with $\det(A) = p^k$.  The diagonal entries of such a matrix are of the form $(p^{i_1},\ldots, p^{i_n})$, where each $i_j \ge 0$ and $\sum_{j=1}^n i_j = k$.
\begin{definition}
A \emph{weak composition} of an integer $k$ is a list of non-negative integers $(\alpha_1,\ldots, \alpha_n)$ where $\sum_{i=1}^n \alpha_i = k$.  Each $\alpha_i$ is a \emph{part} of the weak composition, and $n$ is the \emph{length} or \emph{number of parts}.  A weak composition in which every part is positive is called a \emph{composition} of $k$.
\end{definition}
\noindent The possible diagonals of an $n \times n$ matrix in Hermite normal form with determinant $p^k$ are in bijection with weak compositions of $k$ of length~$n$.  

The number of $n \times n$ matrices in Hermite normal form with diagonal $(p^{i_1},\ldots, p^{i_n})$ is $p^{(n-1) i_1} p^{(n-2) i_2} \cdots p^{i_{n-1}}$.  Taking a sum of these terms over all weak compositions of $k$ into $n$ parts gives a polynomial formula for $a_{p^k}(\Z^n)$.  We have
\begin{eqnarray*}
\zeta_{\Z^n,p}(s) & = &  \sum_{k=0}^\infty a_{p^k}(\Z^n) p^{-ks} = \sum_{i_1 = 0}^\infty \cdots \sum_{i_n = 0}^\infty p^{-(i_1+\cdots + i_n)s} p^{(n-1) i_1} p^{(n-2) i_2} \cdots p^{i_{n-1}} \\
& = & \left(\sum_{i_1 = 0}^\infty p^{-(s-(n-1))i_1} \right) \cdots \left(\sum_{i_{n-1} = 0}^\infty p^{-(s-1)i_{n-1}} \right) \left(\sum_{i_n = 0}^\infty p^{-s  i_n} \right) \\
& = &  (1-p^{-s})^{-1} (1-p^{-(s-1)})^{-1}\cdots (1-p^{-(s-(n-1))})^{-1},
\end{eqnarray*}
completing the proof of \eqref{local_factor_subgroup}.

\subsection{Counting Subrings via Liu's Bijection}\label{sec_subring_matrices}

Liu adapts the argument of Section \ref{sec_hermite} to count subrings of $\Z^n$.  For column vectors $u = (u_1,\ldots, u_n)$ and $w = (w_1,\ldots, w_n)$ we write $u\circ w$ for the column vector given by the componentwise product $(u_1 w_1,\ldots, u_n w_n)$.
\begin{prop}[Propositions 2.1 and 2.2 in \cite{Liu}]
\label{prop:bijection-between-rings-and-matrices}
Let $k$ be a positive integer.  There is a bijection between subrings with identity $L \subset \mathbb{Z}^{n}$ of index $k$ and matrices $A\in M_n(\Z)$ in Hermite normal form with $\det(A) = k$ such that:
\begin{enumerate}
\item the identity element $(1, \ldots, 1)^{T}$ is in the column span of $A$, and 
\item for each $i,j \in [1,n]$, $v_i \circ v_j$ is in the lattice spanned by the column vectors $v_1,\ldots, v_n$.
\end{enumerate}
\end{prop}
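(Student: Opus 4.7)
The plan is to exploit the known bijection between finite index sublattices of $\Z^n$ and matrices in Hermite normal form (reviewed in Section \ref{sec_hermite}), and then identify the additional algebraic constraints that characterize which sublattices are subrings with identity $(1,\ldots,1)^T$.

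First, I would set up the forward direction. Given a subring $L \subset \Z^n$ of index $k$, forget the ring structure; as a sublattice, $L$ is the column span of a unique matrix $A \in M_n(\Z)$ in Hermite normal form with $\det(A) = k$. Because $L$ by hypothesis contains the multiplicative identity of $\Z^n$, the vector $(1,\ldots,1)^T$ lies in the column span of $A$, giving condition (1). For condition (2), I would use that the columns $v_1,\ldots,v_n$ of $A$ lie in $L$ (they are a basis for $L$ as a $\Z$-module). Since $L$ is closed under componentwise multiplication, $v_i \circ v_j \in L$ for all $i,j$, which is exactly the statement of (2).

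For the reverse direction, suppose $A$ is in Hermite normal form with $\det(A) = k$ and satisfies (1) and (2). Let $L$ be the column span of $A$; this is automatically a sublattice of index $k$. Condition (1) gives $(1,\ldots,1)^T \in L$, so $L$ contains the multiplicative identity. To check $L$ is closed under $\circ$, any two elements $x,y \in L$ can be written as $x = \sum_i a_i v_i$ and $y = \sum_j b_j v_j$ with $a_i, b_j \in \Z$. Componentwise multiplication distributes over addition, so
\[
x \circ y = \sum_{i,j} a_i b_j\, (v_i \circ v_j),
\]
and each $v_i \circ v_j$ lies in $L$ by (2); thus $x \circ y \in L$. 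This makes $L$ a subring of $\Z^n$ containing the identity.

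Finally, I would note that the two constructions are mutually inverse: a subring determines its own column-span lattice, and the Hermite normal form matrix of a lattice is unique, so starting from $L$, producing $A$, and then taking the column span recovers $L$ (and vice versa). There is no real obstacle here; the only thing to be careful about is that condition (2) is phrased in terms of just the generators $v_i \circ v_j$ rather than all pairs of lattice elements, and the above distributive calculation is precisely what justifies reducing multiplicative closure to closure on the generating set.
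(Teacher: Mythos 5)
Your proof is correct and is the standard argument: the paper itself gives no proof, citing Liu's Propositions 2.1 and 2.2, and the argument there is essentially what you wrote. The one step that carries any content is exactly the one you flag at the end — using the $\Z$-bilinearity of the componentwise product to reduce multiplicative closure of the whole lattice to closure on the finite set of generator products $v_i \circ v_j$ — and you justify it correctly.
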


\begin{definition}
\begin{enumerate}[wide, labelwidth=!, labelindent=0pt]  
\item A lattice $L \subset \Z^n$ for which $u,w \in L$ implies $u \circ w \in L$ is \emph{multiplicatively closed}.  
\item A matrix that satisfies the conditions of Proposition \ref{prop:bijection-between-rings-and-matrices} is a \emph{subring matrix}. 
\end{enumerate}
\end{definition}
\noindent A subring matrix $A$ is required to be in Hermite normal form, so $(1,\ldots,1)^T$ is in the column span of $A$ if and only if the final column of $A$ is $(1,\ldots,1)^T$.

For fixed $n$, we may calculate $f_n(k)$ by counting the corresponding subring matrices. Since $f_n(k)$ is weakly multiplicative, it suffices to consider $k=p^{e}$ for $p$ prime. This restricts our attention to subring matrices with diagonal entries $(p^{\alpha_{1}},\ldots,p^{\alpha_{n}})$ such that $(\alpha_1,\ldots, \alpha_n)$ is a weak composition of $e$ of length $n$. 

Liu notes that there is a natural correspondence between subrings of $\Z_p^n$ and subrings of $\Z^n$ with index a power of $p$ \cite[Page 283]{Liu}.  Liu defines \emph{irreducible subrings} of $\Z_p^n$, but for notational convenience we prefer to give the corresponding notion for subrings of $\Z^n$ with index equal to a power~of~$p$.
\begin{definition}\label{irred-def}
A subring $L\subset \Z^{n}$ with index equal to a power of $p$ is irreducible if for each $(x_{1},\ldots,x_{n})\in L,\ x_{1}\equiv x_{2} \equiv \ldots \equiv x_{n} \pmod{p}$.
\end{definition}

\begin{thm}\cite[Theorem 3.4]{Liu}
A subring $L\subset \Z^{n}$ of index equal to a power of $p$ can be written uniquely as a direct sum of irreducible subrings $L_{i}\subset \Z^{n}$.
\end{thm}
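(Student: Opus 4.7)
The plan is to construct the decomposition from the Boolean algebra of idempotents of $L$ and then match the resulting pieces with Liu's congruence definition of irreducibility. Because $\Z$ has only $0$ and $1$ as idempotents, the idempotents of $\Z^n$ under componentwise multiplication are precisely the indicator vectors $e_S\in\{0,1\}^n$ for $S\subseteq[n]$. Let $\mathcal{B}\subseteq 2^{[n]}$ denote the set of $S$ with $e_S\in L$. Since $L$ contains $0$ and $(1,\ldots,1)$, is closed under addition and subtraction, and is multiplicatively closed, $\mathcal{B}$ is closed under intersection (from $e_S\circ e_T=e_{S\cap T}$), complementation (from $(1,\ldots,1)-e_S$), and hence union. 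Thus $\mathcal{B}$ is a Boolean subalgebra of $2^{[n]}$, which corresponds to a unique partition $[n]=B_1\sqcup\cdots\sqcup B_k$ whose indicator vectors $e_{B_1},\ldots,e_{B_k}$ are the atoms of $\mathcal{B}$.

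Given the partition, set $L_i:=L\cap\Z^{B_i}$, viewed as a subring of the coordinate ring $\Z^{B_i}$ with identity $e_{B_i}$. For any $v\in L$, multiplicative closure gives $v\circ e_{B_i}\in L$; since this product is supported on $B_i$, it lies in $L_i$. The identity $v=\sum_i v\circ e_{B_i}$, together with the disjointness of the supports, shows $L=L_1\oplus\cdots\oplus L_k$ as abelian groups, and since multiplication factors coordinatewise this is also a direct sum of subrings. Uniqueness will follow from the uniqueness of the atoms of a Boolean algebra, once each $L_i$ is shown to be irreducible.

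By the Euler product reduction (CRT at the prime level) it suffices to check irreducibility when $L$, and hence each $L_i$, has $p$-power index. Suppose $L_i$ is not irreducible in Liu's sense, so some $x\in L_i$ has $B_i$-coordinates not all congruent mod $p$. Then the reduction $\overline{L_i}\subseteq\F_p^{B_i}$ is an $\F_p$-subalgebra strictly containing the diagonal. As a finite commutative $\F_p$-algebra with no nilpotents, $\overline{L_i}$ is semisimple, so by Artin--Wedderburn it is a product of at least two finite field extensions of $\F_p$, yielding a nontrivial idempotent $\bar\epsilon\in\overline{L_i}$. By Hensel's lemma $\bar\epsilon$ lifts uniquely to an idempotent $\epsilon$ of $L_i\otimes\Z_p$; any idempotent of $\Z_p^{B_i}$ is automatically $\{0,1\}$-valued, and since $L_i$ has $p$-power index, $\epsilon\in(L_i\otimes\Z_p)\cap\Z^{B_i}=L_i$. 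Extending by zero outside $B_i$ produces an idempotent of $L$ strictly between $0$ and $e_{B_i}$, contradicting the atomicity of $e_{B_i}$ in $\mathcal{B}$. The converse direction, that a nontrivial idempotent in $L_i$ violates the congruence condition, is immediate.

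The main obstacle in this outline is the Hensel-lifting step, and in particular ensuring that the lift of $\bar\epsilon$ lands in the integral lattice $L_i$ rather than merely in $L_i\otimes\Z_p$; once that is in hand, the rest amounts to Boolean-algebra bookkeeping and the standard fact that atoms of a finite Boolean algebra are determined uniquely.
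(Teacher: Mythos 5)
The paper does not prove this statement; it is cited verbatim from Liu's paper, so there is no in-paper proof to compare against. Evaluated on its own, your idempotent-based decomposition is the natural structural approach and, for index a power of a single prime $p$, it does work; but note that Definition~\ref{irred-def} only defines irreducibility in the prime-power setting, and the theorem is applied in this paper only in that setting, so that is the hypothesis under which the argument should be read.

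Two points need repair. First, the Artin--Wedderburn step is incomplete as stated: a reduced finite commutative $\F_p$-algebra of dimension $\ge 2$ need not have a nontrivial idempotent (e.g.\ $\F_{p^2}$), so "a product of at least two finite field extensions" requires justification. The correct reason is that every element of a subalgebra of $\F_p^{B_i}$ satisfies $x^p=x$, so no proper field extension of $\F_p$ can occur and $\overline{L_i}\cong\F_p^m$ with $m=\dim_{\F_p}\overline{L_i}\ge 2$; only then do you get an idempotent to lift. Second, the "Euler product reduction" sentence is not a valid reduction and conceals a real failure of the general-index claim: if $[\Z^{B_i}:L_i]$ is not a $p$-power, then $(L_i\otimes\Z_p)\cap\Z^{B_i}$ is the preimage of the prime-to-$p$ part of $\Z^{B_i}/L_i$, which is strictly larger than $L_i$, so the Hensel lift need not lie in $L_i$. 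Concretely, $L'=\Z(6,0,0)+\Z(0,3,0)+\Z(1,1,1)\subset\Z^3$ has index $18$, no nontrivial idempotents, but contains $(0,3,0)$ whose coordinates are not all congruent $\bmod\ 2$; here the mod-$2$ idempotent $\overline{(0,1,0)}$ does not lift into $L'$. So the equivalence "no nontrivial idempotent $\iff$ irreducible" that your uniqueness argument relies on genuinely requires the prime-power hypothesis, where your computation $(L_i\otimes\Z_p)\cap\Z^{B_i}=L_i$ is exactly what makes the lift land in $L_i$. With that hypothesis in place, the Boolean-algebra bookkeeping, the decomposition $v=\sum_i v\circ e_{B_i}$, and the uniqueness via atoms are all sound.
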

It is easy to see if a subring is irreducible by considering the corresponding subring matrix.

\begin{prop}\cite[Proposition 3.1]{Liu}
\label{prop:irreducible-corresponding-matrix}
An $n\times n$ subring matrix represents an irreducible subring if and only if its first $n-1$ columns contain only entries divisible by $p$, and its final column is equal the identity $(1, \ldots, 1)^{T}$.
\end{prop}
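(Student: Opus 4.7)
The plan is to prove the two directions separately, with the reverse direction requiring the bulk of the work.

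For the forward direction, suppose the first $n-1$ columns of the subring matrix $A$ consist entirely of entries divisible by $p$, and the last column is $(1,\ldots,1)^T$. Any element of $L$ is an integer combination $\sum_{i=1}^n c_i v_i$, so its contribution from the first $n-1$ columns lies entirely in $p\Z^n$, while the last column contributes $c_n$ to every coordinate. Every coordinate is therefore congruent to $c_n \pmod p$, and $L$ is irreducible.

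For the reverse direction, assume $L$ is irreducible with subring matrix $A$ in Hermite normal form and diagonal $(p^{\alpha_1},\ldots,p^{\alpha_n})$ (these are forced to be powers of $p$ since $\det A = p^e$). First I would show $a_{nn}=1$: since $(1,\ldots,1)^T\in L$ is a $\Z$-combination of the columns and only $v_n$ has a nonzero $n$-th coordinate, reading off the $n$-th entry gives $1 = c_n a_{nn}$, so $a_{nn}=1$. Second, I would show every entry in the first $n-1$ columns is divisible by $p$: for $i<n$, the column $v_i$ has its last $n-i$ entries equal to $0$ (upper triangularity), and since $v_i\in L$ and $L$ is irreducible, all entries of $v_i$ must be congruent mod $p$ to those last zeros. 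In particular $p\mid a_{ii}$, so $\alpha_i\ge 1$ for each $i<n$.

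The remaining and most delicate step is to identify the last column exactly. Merely observing that each $a_{in}$ is $\equiv 1\pmod p$ (because $v_n\in L$ has last entry $1$) is not enough, since Hermite normal form only forces $0\le a_{in}<p^{\alpha_i}$. To pin down the values, I would consider the vector $w=(1,\ldots,1)^T - v_n$, which lies in $L$ and has $n$-th coordinate $0$. Writing $w=\sum_{i=1}^n c_i v_i$, the $n$-th coordinate forces $c_n=0$, so $w=\sum_{i<n}c_i v_i$. I would then argue by reverse induction on $i$: assuming $a_{jn}=1$ for all $j>i$, the $j$-th coordinate of $w$ for $j>i$ is $0$, and since $v_{i+1},\ldots,v_{n-1}$ have zero entries below their diagonals, this forces $c_j=0$ for $j>i$. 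The $i$-th coordinate of $w$ then reads
\[
1 - a_{in} = c_i\, a_{ii} = c_i\, p^{\alpha_i}.
\]
Combining $0\le a_{in}<p^{\alpha_i}$ with $\alpha_i\ge 1$, the only multiple of $p^{\alpha_i}$ in the interval $(1-p^{\alpha_i},1]$ is $0$, forcing $a_{in}=1$. Running this from $i=n-1$ down to $i=1$ gives $v_n=(1,\ldots,1)^T$.

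The main obstacle is precisely this last step: the congruence information $a_{in}\equiv 1\pmod p$ does not by itself determine $a_{in}$, and one genuinely needs the interplay between the Hermite bound, the fact that $\alpha_i\ge 1$ (which itself relies on irreducibility via the second step), and the propagation argument using the vector $w$. Once that reverse induction is set up, the rest of the proof is routine bookkeeping.
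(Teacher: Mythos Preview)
The paper does not supply its own proof of this proposition: it is quoted verbatim as \cite[Proposition 3.1]{Liu} and used as a black box. So there is nothing in the present paper to compare against, and the only question is whether your argument stands on its own.

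It does. Your ``forward'' direction (condition $\Rightarrow$ irreducible) is immediate and correct. In the other direction your three steps are each sound:
\begin{itemize}
\item $a_{nn}=1$ follows from reading the last coordinate of $(1,\ldots,1)^T=\sum c_iv_i$ and holds for any subring matrix, not just irreducible ones;
\item for $i<n$ the column $v_i$ has last entry $0$, so irreducibility forces every entry of $v_i$ to be $\equiv 0\pmod p$, giving $\alpha_i\ge 1$;
\item the downward induction using $w=(1,\ldots,1)^T-v_n$ is exactly right. The point you correctly flag --- that $a_{in}\equiv 1\pmod p$ alone does not pin down $a_{in}$ when $\alpha_i\ge 2$ --- is genuinely the crux, and your back-substitution handles it: from $w_j=0$ for $j>i$ and the invertible upper-triangular system you get $c_j=0$ for $i<j\le n-1$, whence $1-a_{in}=c_ip^{\alpha_i}$, and the Hermite bound $0\le a_{in}<p^{\alpha_i}$ together with $\alpha_i\ge 1$ forces $a_{in}=1$.
\end{itemize}

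One cosmetic remark: your labeling of ``forward'' and ``reverse'' is the opposite of the usual convention (one normally reads ``$L$ irreducible $\Rightarrow$ matrix has the stated form'' as the forward direction of the biconditional as written). The mathematics is unaffected.
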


Recall that $g_n(k)$ is the number of irreducible subrings of $\Z^n$ of index $k$.  In Section \ref{sec3}, we give formulas for $g_n(p^e)$ for each $e \le 8$.  We recall the following recurrence due to Liu.  We again emphasize that $g_n(p^e)$ is given by $g_{n-1}(p^e)$ in \cite{Liu}.  Define $f_0(1) = 1$ and $f_0(p^e) = 0$ for $e> 0$.

\begin{prop}\cite[Proposition 4.4]{Liu}\label{fnrecurrence}
The following recurrence holds for $n > 0$:
\[
f_n(p^e) = \sum_{i=0}^e \sum_{j=1}^{n} \binom{n-1}{j-1} f_{n-j}(p^{e-i}) g_j(p^i).
\]
\end{prop}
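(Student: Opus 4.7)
The plan is to use the unique decomposition theorem of Liu stated just above (every finite-index subring of $\Z^n$ is uniquely a direct sum of irreducibles) to set up a bijective counting argument by peeling off the irreducible summand containing coordinate~$1$. Concretely, given a subring $L\subset\Z^n$ of index~$p^e$, write it uniquely as $L=L_1\oplus\cdots\oplus L_m$ corresponding to a partition $\{1,\ldots,n\}=B_1\sqcup\cdots\sqcup B_m$, where each $L_t$ is an irreducible subring of $\Z^{B_t}$. Relabel so that $1\in B_1$, and let $j=|B_1|$ and $p^i=[\Z^{B_1}:L_1]$; the product of the indices of the remaining summands is then $p^{e-i}$.

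Next I would enumerate the data $(B_1,L_1,L')$, where $L'=\bigoplus_{t\ge 2}L_t$ sits inside $\Z^{\{1,\ldots,n\}\setminus B_1}$. Since $1\in B_1$ is fixed, the set $B_1$ is determined by its $j-1$ other coordinates, giving $\binom{n-1}{j-1}$ choices. Because the natural $S_n$-action on $\Z^n$ by coordinate permutations consists of ring automorphisms that preserve both the identity $(1,\ldots,1)^T$ and the congruence-mod-$p$ condition defining irreducibility, the number of irreducible subrings of index $p^i$ supported on any fixed $j$-subset equals $g_j(p^i)$. Finally, the complementary summand $L'$ is an arbitrary finite-index subring of $\Z^{n-j}$ of index $p^{e-i}$, accounting for $f_{n-j}(p^{e-i})$ choices. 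Summing over $1\le j\le n$ and $0\le i\le e$ yields the stated identity.

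I do not expect a serious obstacle here; the argument is essentially a formal consequence of the unique decomposition together with the permutation symmetry of $\Z^n$. The main point requiring attention is the boundary behavior, which is forced by the conventions in the statement. Taking $j=n$ forces $i=e$ (via $f_0(1)=1$ and $f_0(p^{e-i})=0$ for $e-i>0$), recovering the globally irreducible contribution $g_n(p^e)$. The $j=1$ slot, combined with $g_1(1)=1$ and $g_1(p^i)=0$ for $i\ge 1$ (since the only subring of $\Z$ containing $1$ is $\Z$ itself), collapses to $\binom{n-1}{0}f_{n-1}(p^e)=f_{n-1}(p^e)$, corresponding to subrings in which coordinate~$1$ splits off as its own irreducible block. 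Injectivity of the tuple-to-subring correspondence is immediate from uniqueness of the decomposition, and surjectivity follows because any such tuple reassembles into a direct sum that is a subring of $\Z^n$ containing the identity.
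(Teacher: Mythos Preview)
Your proposal is correct, and indeed this is exactly the natural proof: peel off the irreducible block containing the distinguished coordinate, count its placements via $\binom{n-1}{j-1}$, and recurse on the complement. Note, however, that the paper does not supply its own proof of this proposition---it simply cites \cite[Proposition~4.4]{Liu}---so there is nothing in the present paper to compare against. Your argument is essentially Liu's, and the boundary checks you give (the $j=n$ term isolating $g_n(p^e)$ via the convention $f_0(1)=1$, and the $j=1$ term collapsing via $g_1(p^i)=[i=0]$) are handled correctly.
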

In order to show for fixed $n$ and $e$ that $f_n(p^e)$ is a polynomial in $p$, it is enough to show that $f_j(p^k)$ is a polynomial in $p$ for each fixed $j\le n-1$ and $k \le e$, and that $g_j(p^i)$ is a polynomial in $p$ for each fixed $j \le n$ and $i \le e$.

\section{Computing $f_n(p^6)$, $f_n(p^7)$, and $f_n(p^8)$}\label{sec3}

In this section we compute $f_n(p^6),\ f_n(p^7)$, and $f_n(p^8)$, proving Theorem \ref{Main_Theorem}.  We do this by computing $g_n(p^e)$ for any $n$ and fixed $e\le 8$ and then applying Proposition \ref{fnrecurrence}. 

\subsection{Statement of Results}  

In this subsection, we state the main results that lead to the proof of Theorem \ref{Main_Theorem}. Recall that for fixed $e$, $ g_n(p^e) = 0$ for all but finitely many $n$.  Note that $g_1(1) = 1$ and $g_1(p^e) = 0$ for $e \ge 1$.

\begin{prop}\cite[Proposition 4.3]{Liu}\label{Propgnpn}
For all $n \ge 2$, we have that $g_n(p^e) = 0$ for $e < n-1,\ g_{n}(p^{n-1}) = 1$, and $g_n(p^{n}) = \frac{p^{n-1}-1}{p-1}$.
\end{prop}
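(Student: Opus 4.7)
The plan is to exploit the explicit description of irreducible subring matrices from Proposition \ref{prop:irreducible-corresponding-matrix}. Such a matrix $A \in M_n(\Z)$ is in Hermite normal form with last column $(1,\ldots,1)^T$ and every entry of its first $n-1$ columns divisible by $p$. This forces $a_{nn} = 1$, and for each $i < n$ the diagonal entry $a_{ii}$ must be a power $p^{\alpha_i}$ with $\alpha_i \ge 1$ (it divides $p^e$ and is itself divisible by $p$). Since $\det(A) = p^e = \prod_i a_{ii}$, one gets $e = \sum_{i=1}^{n-1}\alpha_i \ge n-1$, which proves the first statement.

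For $g_{n+1}(p^n)$, the only admissible diagonal is $(p,\ldots,p,1)$ with $n$ copies of $p$. For every above-diagonal entry $a_{ij}$ with $i<j\le n$, the Hermite bound $a_{ij} < a_{ii} = p$ together with $p \mid a_{ij}$ forces $a_{ij} = 0$, so the matrix is uniquely determined. I would then quickly verify multiplicative closure: with $v_i = p e_i$ for $i\le n$ (where $e_i$ denotes the $i$th standard basis vector) and $v_{n+1} = \sum_{i=1}^{n+1}e_i$, one checks that $v_i\circ v_j = \delta_{ij}\cdot p v_i \in L$ for $i,j\le n$, and $v_i\circ v_{n+1} = v_i \in L$, completing this case.

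For $g_n(p^n)$, the compositions of $n$ into $n-1$ positive parts are exactly those with one part equal to $2$ and the rest equal to $1$, parametrised by the position $k\in\{1,\ldots,n-1\}$ of the $2$. With diagonal $(p,\ldots,p,p^2,p,\ldots,p,1)$ (the $p^2$ at position $k$), the only above-diagonal entries (aside from the fixed last column) that can be nonzero are $a_{kj}$ for $k<j<n$, each ranging over $\{0,p,2p,\ldots,(p-1)p\}$; this yields $p^{n-1-k}$ matrices for each $k$, and summing gives $\sum_{k=1}^{n-1} p^{n-1-k} = (p^{n-1}-1)/(p-1)$. The main obstacle is checking that every such matrix is multiplicatively closed. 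Writing $a_{kj} = p m_j$, one has $v_j = p m_j e_k + p e_j$ for $k<j<n$; expanding $v_i\circ v_j$ componentwise, the coefficients of $e_k$ come out as $p^2 m_i m_j$ (or $p^2 m_i(m_i-1)$ for $i=j$), both giving integer multiples of $v_k = p^2 e_k$, with any remaining $p^2 e_i$ term absorbed via the identity $p^2 e_i = p v_i - m_i v_k$. Once this closure check is verified, the per-$k$ counts sum to the claimed formula.
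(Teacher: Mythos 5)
The paper cites \cite[Proposition 4.3]{Liu} for this statement and gives no proof of its own, so there is no in-paper argument to compare against. Your argument is a correct self-contained proof via the subring-matrix characterization (Proposition \ref{prop:irreducible-corresponding-matrix}): the determinant condition forces each of the first $n-1$ diagonal entries to be at least $p$, giving the vanishing for $e<n-1$; the $g_{n+1}(p^n)$ case pins the matrix down to the unique one with diagonal $(p,\ldots,p,1)$ and all strict off-diagonal entries zero, and the closure check there is immediate; and the $g_n(p^n)$ case correctly parametrizes by the position $k$ of the single $p^2$ on the diagonal and counts $\sum_{k=1}^{n-1}p^{n-1-k}=(p^{n-1}-1)/(p-1)$. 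Two small points worth tightening in that last closure check: for $i<k$ one has $v_i=pe_i$ with no $m_i$ (so the ``absorbing'' identity you state only applies to indices $>k$, while for $i<k$ the products $v_i\circ v_j$ either vanish or equal $p\,v_i$); and in the $i=j$ case it is cleaner to write $v_j\circ v_j = p\,v_j + m_j(m_j-1)\,v_k$ to see explicitly where the factor $m_j(m_j-1)$ comes from, since the raw coefficient of $e_k$ in $v_j\circ v_j$ is $p^2m_j^2$, not $p^2m_j(m_j-1)$. With those clarifications, the argument is sound and the count is right.
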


Proposition \ref{Liu_fn_prop} gives Liu's polynomial formulas for $f_n(p^e)$ for $e \le 5$ \cite[Proposition 1.1]{Liu}.  We note that there is a slight error in Liu's computation for $e = 5$, so we have stated the corrected result.

It is easy to see that $g_2(p^e) = 1$ for all $e \ge 1$.  Although they are not written explicitly, Liu gives polynomial formulas for $g_n(p^e)$ for $n =3, 4$ \cite[Propositions 6.2 and 6.3]{Liu}.  We record the values of $g_3(p^e)$ for $e \in \{4,5,6,7,8\}$ and of $g_4(p^e)$ for $e\in \{5,6,7,8\}$ since these values are used in the proof of Theorem \ref{Main_Theorem}:

\begin{table}[h!]\label{Tab1}
\renewcommand{\arraystretch}{1.2}
\centering
\begin{tabular}{|c | c |}
\hline
$g_3(p^4)$ & $3p+1$ \\
$g_3(p^5)$ & $4p+1$\\
$g_3(p^6)$ & $p^2+4p+1$ \\
$g_3(p^7)$ & $3p^2+4p+1$\\
$g_3(p^8)$ & $4p^2+4p+1$\\
$g_4(p^5)$ & $7p^2+p+1$\\
$g_4(p^6)$ & $p^3 + 14p^2 + p + 1$\\
$g_4(p^7)$ & $10p^3 + 12p^2 + p + 1$\\
$g_4(p^8)$ & $p^4 + 26p^3 + 9p^2 + p + 1$\\
\hline
\end{tabular}.
\caption{{Values of $g_n(p^e)$ for $n = 3,4$ and $e \le 8$.}}
\end{table}

\vspace{-.7cm}

Combining Propositions \ref{Liu_fn_prop} and \ref{fnrecurrence} with an induction argument gives polynomial formulas for $f_n(p^e)$ for all $e \le 5$ and all $n$.  Theorem \ref{Main_Theorem} follows from Proposition \ref{Liu_fn_prop}, the results of Table 1, and the following result.

\begin{thm}\label{gnpe_thm}
We have the following values of $g_n(p^e)$:
\begin{table}[h!]
\renewcommand{\arraystretch}{1.2}
\centering
\begin{tabular}{|c | c |}
\hline
$g_5(p^6)$ & $p^4 + 11p^3 + 2p^2 + p + 1$ \\
$g_5(p^7)$ & $15p^4 + 21p^3 + 6p^2 + p + 1$\\
$g_5(p^8)$ & $p^5 + 77p^4 - 13p^3 + 12p^2 + p + 1$ \\
$g_6(p^7)$ & $p^6 + p^5 + 17p^4 + 2p^3 + 2p^2 + p + 1$\\
$g_6(p^8)$ & $16p^6 + 31p^5 + 22p^4 + 22p^3 + 2p^2 + p + 1$\\
$g_7(p^8)$ & $p^8 + p^7 + 2p^6 + 23p^5 + 3p^4 + 2p^3 + 2p^2 + p + 1$\\
\hline
\end{tabular}.
\caption{{Values of $g_n(p^e)$ for $n = 5,6,7$ and $e \le 8$.}}
\end{table}
\end{thm}

\subsection{Strategy of the proof of Theorem \ref{gnpe_thm}}

In the rest of this section we explain the strategy of the proof of Theorem \ref{gnpe_thm}.  Recall that the last column of any irreducible subring matrix is $(1,\ldots, 1)^T$ and that every other entry is divisible by $p$.  The possible diagonals of an $n \times n$ irreducible subring matrix of determinant $p^e$ are in bijection with compositions of $e$ of length~$n-1$.  
\begin{definition}
Let $\mathcal{C}_{n,e}$ denote the set of compositions of $e$ into $n-1$ parts.  For a composition $\alpha$ of length $n-1$ let $g_\alpha(p)$ denote the number of irreducible subrings of $\Z^n$ with diagonal entries $(p^{\alpha_1}, p^{\alpha_2}, \ldots, p^{\alpha_{n-1}}, 1)$.  
\end{definition}
\noindent It is a standard fact that $|\mathcal{C}_{n,e}| = \binom{e-1}{n-2}$. 

Combining these definitions shows that for any positive integers $n$ and~$e$,
\begin{equation}
g_n(p^e) = \sum_{\alpha \in \mathcal{C}_{n,e}} g_\alpha(p).
\end{equation}
The following result shows that when computing $g_n(p^e)$ for the values of $n,e$ of Theorem \ref{gnpe_thm} we do not have to compute $g_\alpha(p)$ for every $\alpha \in \mathcal{C}_{n,e}$.

\begin{lemma}\label{lem2}
Let $\alpha = (1,\alpha_2,\ldots, \alpha_k)$ be a composition of a positive integer $e$ and $\alpha' = (\alpha_2,\ldots, \alpha_k)$. We have $g_\alpha(p) = g_{\alpha'}(p)$.
\end{lemma}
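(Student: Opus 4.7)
The plan is to set up an explicit bijection between irreducible subring matrices of $\Z^n$ with diagonal $(p, p^{\alpha_2}, \ldots, p^{\alpha_k}, 1)$ (where $n = k+1$) and irreducible subring matrices of $\Z^{n-1}$ with diagonal $(p^{\alpha_2}, \ldots, p^{\alpha_k}, 1)$, sending each matrix $A$ to the $(n-1) \times (n-1)$ submatrix $B$ obtained by deleting the first row and first column. The first step is to observe that the first diagonal entry being $p$ forces a rigid structure on the bordering row and column, so that the data in $A$ is entirely determined by $B$. Upper triangularity forces the first column of $A$ to be $(p, 0, \ldots, 0)^T$. For the first row, the Hermite normal form bound $a_{1j} < a_{11} = p$ combined with the irreducibility condition that columns $2$ through $n-1$ of $A$ have entries divisible by $p$ forces $a_{1j} = 0$ for $2 \le j \le n-1$, while $a_{1n} = 1$ comes from the identity last column; so the first row is $(p, 0, \ldots, 0, 1)$.

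The remaining conditions for $B$ to be an irreducible subring matrix --- Hermite normal form, prescribed diagonal, last column $(1, \ldots, 1)^T$, and divisibility of the first $n-2$ columns by $p$ --- follow from the corresponding conditions on $A$ by direct inspection. What requires real work is checking that multiplicative closure transfers between $A$ and $B$. Denote the columns of $A$ by $v_1, \ldots, v_n$ and those of $B$ by $w_1, \ldots, w_{n-1}$, so that $v_1 = p e_1$, $v_j = (0, w_{j-1})^T$ for $2 \le j \le n-1$, and $v_n = (1, 1, \ldots, 1)^T$. Products involving $v_1$ are automatic, since $v_1 \circ v_l \in \{p v_1, 0, v_1\}$, and products with $v_n$ are trivial since $v_l \circ v_n = v_l$.

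The substantive case is $v_i \circ v_j$ for $2 \le i, j \le n-1$, which equals $(0, w_{i-1} \circ w_{j-1})^T$. Given an expansion $w_{i-1} \circ w_{j-1} = \sum_l c_l w_l$ inside the column span of $B$, one computes $(0, w_{i-1} \circ w_{j-1})^T = \sum_{l=1}^{n-2} c_l v_{l+1} + c_{n-1}(v_n - e_1)$, so the obstruction to membership in the column span of $A$ is the stray $-c_{n-1} e_1$ term, which is an integer combination of the $v_l$ only if $p \mid c_{n-1}$. The key observation clearing this obstruction is that $w_{i-1}$ and $w_{j-1}$ are both among the first $n-2$ columns of $B$, each of which has last coordinate zero (since $B$ is upper triangular and those columns lie strictly to the left of the bottom-right diagonal entry). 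Hence the last coordinate of $w_{i-1} \circ w_{j-1}$ vanishes, and since $w_{n-1} = (1, \ldots, 1)^T$ is the unique column of $B$ with nonzero last coordinate, we force $c_{n-1} = 0$. The converse direction, that multiplicative closure of $A$ implies closure of $B$, follows by projecting the closure relation onto the last $n-1$ coordinates. I expect this last check to be the main obstacle, since it requires pinpointing exactly why the missing factor of $1/p$ in combining $v_n$ and $v_1$ causes no problem --- namely, that only coefficients $c_{n-1}$ of $w_{n-1}$ matter, and these are automatically zero in all relevant products.
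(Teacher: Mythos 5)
Your proof is correct and takes the same approach as the paper: one observes that a leading diagonal entry $p$ forces the first column to be $(p,0,\ldots,0)^T$ and the first row to be $(p,0,\ldots,0,1)$, and then matches the multiplicative-closure conditions on the $n\times n$ matrix $A$ to those on the $(n-1)\times(n-1)$ block $B$. The paper simply asserts that ``the conditions derived from taking products of pairs of columns are identical in both cases,'' whereas you actually justify this by identifying the only possible obstruction (the stray $-c_{n-1}e_1$ term coming from the all-ones column of $B$) and showing $c_{n-1}=0$ always, since $w_{i-1}\circ w_{j-1}$ has vanishing last coordinate while $w_{n-1}$ is the unique column of $B$ with nonzero last coordinate --- a worthwhile detail to have spelled out.
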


\begin{proof}
An irreducible subring matrix with a $p$ as its first diagonal entry has first row equal to $(p,0,\ldots, 0,1)$ since every entry $a_{1,j}$ with $j \not\in \{1,n\}$ satisfies $0 \le a_{1,j} < p$ and $a_{1,j} \equiv 0 \pmod{p}$.  The conditions derived from taking products of pairs of columns are identical in both cases.
\end{proof}

Let $\mathcal{C}'_{n,e}$ denote the set of compositions of $e$ into $n-1$ parts where the first part is larger than $1$.  Lemma \ref{lem2} implies that 
\begin{equation}\label{gn_recursive}
g_n(p^e) = g_{n-1}(p^{e-1}) + \sum_{\alpha \in \mathcal{C}_{n,e}'} g_\alpha(p).
\end{equation}
Using this idea, the formula for $g_5(p^6)$ follows from computing $g_\alpha(p)$ for 
\[
\alpha \in
\left\{
(3,1,1,1), (2,2,1,1),(2,1,2,1),(2,1,1,2)
\right\}.
\]

There are some particular classes of compositions for which we can explicitly compute $g_\alpha(p)$.   We defer the proofs of the following two lemmas to Section \ref{sec:details}.
\begin{lemma}\label{Lem_beta}
Let $n \ge 2$ and $\alpha = (\beta,1,\ldots, 1)$ be a composition of length $n-1$.  
\begin{enumerate}
\item If $\beta = 2$, then $g_{\alpha}(p) = p^{n-2}$.
\item If $\beta \ge 3$, then $g_{\alpha}(p) = (n-1)p^{n-2}$.
\end{enumerate}
\end{lemma}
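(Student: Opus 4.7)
The plan is to exploit the rigidity of the subring matrix when the diagonal has shape $(p^{\beta}, p, p, \ldots, p, 1)$. Let $v_1, \ldots, v_n$ denote its columns and $e_1, \ldots, e_n$ the standard basis of $\Z^n$. For each $i \in \{2,\ldots,n-1\}$ and each $k$ with $2 \le k < i$, the entry $a_{ki}$ satisfies $0 \le a_{ki} < a_{kk} = p$ and is divisible by $p$, hence vanishes; by Proposition \ref{prop:irreducible-corresponding-matrix}, $v_n = (1,\ldots,1)^T$. The only free entries are therefore $a_{1,i} = p c_i$ for $i \in \{2,\ldots,n-1\}$, with $c_i \in \{0,1,\ldots,p^{\beta-1}-1\}$, so that $v_i = p c_i\, e_1 + p\, e_i$ for $2 \le i \le n-1$.

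Next I translate the multiplicative closure condition into congruences on the $c_i$. The products $v_1 \circ v_j$, $v_j \circ v_n$, and $v_n \circ v_n$ lie in the column lattice trivially. A direct calculation yields
\[ v_i \circ v_i \;=\; p\, v_i + p^2 c_i(c_i - 1)\, e_1, \qquad v_i \circ v_j \;=\; p^2 c_i c_j\, e_1 \quad (i \ne j), \]
for $i,j \in \{2,\ldots,n-1\}$. Since $p^{\beta}\, e_1 = v_1$ lies in the lattice, membership of these products is equivalent to $p^{\beta-2} \mid c_i(c_i-1)$ for each $i$ and $p^{\beta-2} \mid c_i c_j$ for each $i \ne j$.

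When $\beta = 2$ these conditions are vacuous, so each $c_i$ ranges freely over $\{0,\ldots,p-1\}$, giving $p^{n-2}$ matrices. When $\beta \ge 3$, the coprimality of $c_i$ and $c_i - 1$ forces $c_i \equiv 0$ or $c_i \equiv 1 \pmod{p^{\beta-2}}$; each residue class contributes exactly $p$ admissible values in $\{0,\ldots,p^{\beta-1}-1\}$. The cross condition fails precisely when some pair $i \ne j$ satisfies $c_i \equiv c_j \equiv 1 \pmod{p^{\beta-2}}$, since then $c_i c_j \equiv 1 \pmod{p^{\beta-2}}$; all other combinations are automatically admissible. Counting gives $p^{n-2}$ tuples with every $c_i \equiv 0$, together with $(n-2)\cdot p \cdot p^{n-3}$ tuples with exactly one $c_i \equiv 1$, for a grand total of $(n-1)p^{n-2}$.

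The only genuine obstacle is making the bookkeeping in the product computation airtight. The sparsity of $v_i = p c_i\, e_1 + p\, e_i$, involving only two basis vectors, keeps the computation routine and ensures no hidden component appears in $v_i \circ v_j$ beyond the $e_1$-component written above. For compositions with more than one entry exceeding $1$ the columns acquire additional nonzero coordinates, so extra congruences emerge; that is precisely why such cases are outside the scope of this lemma and must be handled separately.
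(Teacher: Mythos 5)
Your proof is correct and follows essentially the same strategy as the paper: both arguments reduce to the observation that the only free entries are in the first row, derive the congruence conditions from $v_i \circ v_i$ and $v_i \circ v_j$, and then count tuples where at most one column carries the nontrivial residue. Your parameterization via $c_i \pmod{p^{\beta-2}}$ is slightly cleaner bookkeeping than the paper's case split on the $p$-adic valuation $\gamma_j$ of $a_{1,j}$, but it yields the identical count $p^{n-2} + (n-2)p^{n-2} = (n-1)p^{n-2}$.
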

The first part of this lemma together with Lemma \ref{lem2} shows that 
\[
g_n(p^n) = g_{(2,1,\ldots,1)}(p) + g_{(1,2,1,\ldots,1)}(p) + \cdots + g_{(1,\ldots,1,2)}(p) =
p^{n-2} + p^{n-3} + \cdots + 1,
\] 
which proves the formula given in Proposition \ref{Propgnpn}.

\begin{lemma}\label{Lem_2beta}
Let $n \ge 3$ and $\alpha = (2,1,\ldots,1,\beta,1,\ldots, 1)$ be a composition of length $n-1$ where the $\beta$ is in the $k$\textsuperscript{th} position.  Set $r = n - 1-k$, i.e., the number of parts equal to $1$ following $\beta$.  Then
\begin{enumerate}
\item If $\beta = 2$, 
\[
g_{\alpha}(p) = p^{n-3+r} + (r+1) p^{n-3}(p-1).
\]
\item If $\beta \ge 3$, 
\[
g_{\alpha}(p) = (r+1)\left(p^{n-3+r} + p^{n-3}(p-1)\right).
\]
\end{enumerate}
\end{lemma}

Applying Lemma \ref{Lem_beta} and Lemma \ref{Lem_2beta} gives the following corollary.
\begin{cor}\label{cor_gnpn1}
For any $n \ge 1$ we have 
\begin{eqnarray*}
& & g_n(p^{n+1}) = \\
& &  \frac{2p^{2n-3} + (n^2-n)p^{n+1} - (n^2-n) p^n - (n^2-n+2)p^{n-1} + (n^2-n-2)p^{n-2}+2}{2(p-1)^2 (p+1)}.
\end{eqnarray*}
\end{cor}

\begin{proof}
By equation \eqref{gn_recursive} we have that 
\[
g_n(p^{n+1})   =  g_{n-1}(p^n) + g_{(3,1,\ldots,1)}(p) + g_{(2,2,1,\ldots,1)}(p) + g_{(2,1,2,\ldots,1)}(p) + g_{(2,1,\ldots,1,2)}(p).
\]
Lemma \ref{Lem_beta} implies that $g_{(3,1,\ldots,1)}(p) = (n-1)p^{n-2}$.  The first part of Lemma \ref{Lem_2beta} implies that
\begin{multline*}
g_{(2,2,1,\ldots,1)}(p) + g_{(2,1,2,\ldots,1)}(p) + \cdots + g_{(2,1,\ldots,1,2)}(p)  =    \\
p^{n-3} + p^{n-3+1} + \cdots + p^{n-3+n-3}  + p^{n-3}(p-1) \left(1 + 2 +\cdots +(n-2)\right) = \\
 p^{n-3}\left( \frac{p^{n-2}-1}{p-1} + (p-1) \binom{n-1}{2}\right).
\end{multline*}
We argue by induction, first noting that $g_1(p^2) = 0$ and 
\[
g_2(p^3) = \frac{ 2 p + 2 p^3 - 2 p^2 - 4 p + 2}{2(p-1)^2 (p+1)} = 1.
\]
The induction hypothesis along with some algebraic simplification shows that 
\[
g_n(p^{n+1})   - g_{n-1}(p^n) = (n-1)p^{n-2} + p^{n-3} \left( \frac{p^{n-2}-1}{p-1} + (p-1) \binom{n-1}{2}\right),
\]
completing the proof.

\end{proof}
\noindent Note that the formula of Corollary \ref{cor_gnpn1} matches the expressions for $g_5(p^6),\ g_6(p^7)$, and $g_7(p^8)$ given in Theorem \ref{gnpe_thm}.  The following result together with Lemmas \ref{lem2}, \ref{Lem_beta} and \ref{Lem_2beta} and the formula for $g_4(p^6)$ given in Table \ref{Tab1} implies the formula for $g_5(p^7)$ given in Theorem \ref{gnpe_thm}.

\begin{thm}\label{p7_comps}
We have that:
\begin{table}[h!]
\renewcommand{\arraystretch}{1.2}
\centering
\begin{tabular}{|c | c |}
\hline
$g_{(2,2,2,1)}(p)$ &  $3p^4+2p^3-4p^2$\\
$g_{(2,2,1,2)}(p)$ &  $p^4 +2p^3-2p^2$ \\
$g_{(2,1,2,2)}(p)$ &  $2p^3-p^2$ \\
$g_{(3,2,1,1)}(p)$ &  $7 p^4 - 6p^3 + 6p^2$ \\
$g_{(3,1,2,1)}(p)$ &  $p^4+5 p^3-2p^2$ \\
$g_{(3,1,1,2)}(p)$ &  $3p^3$\\
\hline
\end{tabular}.
\caption{{Values of $g_\alpha(p)$ for some $\alpha \in \mathcal{C}'_{5,7}$.}}
\end{table}
\end{thm}

\vspace{-.8cm}

\noindent We defer the proof to Section \ref{sec:details}.

In order to compute $g_5(p^8)$ and $g_6(p^8)$, we must work significantly harder.  The following result together with the results above imply the formulas for $g_5(p^8)$ and $g_6(p^8)$  given in Theorem \ref{gnpe_thm}.
\begin{thm}\label{p8_comps}
We have that:
\begin{table}[h!]
\renewcommand{\arraystretch}{1.2}
\centering
\begin{tabular}{|c | c |}
\hline
$g_{(2,1,2,3)}(p)$ &  $2p^3 - p^2$ \\
$g_{(2,1,3,2)}(p)$ &  $p^4$ \\
$g_{(2,2,1,3)}(p)$ &  $p^4 + 2p^3 - 2p^2$ \\
$g_{(2,2,2,2)}(p)$ &  $2p^4 - p^2$ \\
$g_{(2,2,3,1)}(p)$ &  $4p^4 + 2p^3 - 4p^2$ \\
$g_{(2,3,1,2)}(p)$ &  $3p^4 - p^2$\\
$g_{(2,3,2,1)}(p)$ &  $10p^4 - 11p^3 + 4p^2$\\
$g_{(3,1,1,3)}(p)$ &  $3p^3$\\
$g_{(3,1,2,2)}(p)$ &  $p^4 + 2p^3 - p^2$\\
$g_{(3,1,3,1)}(p)$ &  $2p^4 + 6p^3 - 2p^2$\\
$g_{(3,2,1,2)}(p)$ &  $5p^4 - 4p^3 + 2p^2$\\
$g_{(3,2,2,1)}(p)$ &  $p^5 + 8p^4 - 8p^3 + 4p^2$\\
$g_{(3,3,1,1)}(p)$ &  $15p^4 - 9p^3 + 3p^2$\\
$g_{(4,1,1,2)}(p)$ &  $p^4 + 2p^3$\\
$g_{(4,1,2,1)}(p)$ &  $5p^4 - p^3 - 2p^2$\\
$g_{(4,2,1,1)}(p)$ &  $16p^4 - 21p^3 + 6p^2$\\
\hline
\end{tabular},
\caption{{Values of $g_\alpha(p)$ for some $\alpha \in \mathcal{C}'_{5,8}$.}}
\end{table}\\
\noindent and 
\begin{table}[h!]
\renewcommand{\arraystretch}{1.2}
\centering
\begin{tabular}{|c | c |}
\hline
$g_{(2,1,1,2,2)}(p)$ &  $2p^4 - p^3$\\
$g_{(2,1,2,1,2)}(p)$ &  $p^5 + 2p^4 - 2p^3$ \\
$g_{(2,1,2,2,1)}(p)$ &  $3p^5 + 2p^4 - 4p^3$ \\
$g_{(2,2,1,1,2)}(p)$ &  $p^6 + 3p^4 - 3p^3$ \\
$g_{(2,2,1,2,1)}(p)$ &  $2p^6 + 2p^5 + 2p^4 - 7p^3 + 2p^2$ \\
$g_{(2,2,2,1,1)}(p)$ &  $4p^6 + 7p^5 - 16p^4 + 12p^3 - 6p^2$ \\
$g_{(3,1,1,1,2)}(p)$ &  $4p^4$\\
$g_{(3,1,1,2,1)}(p)$ &  $p^5 + 8p^4 - 4p^3$\\
$g_{(3,1,2,1,1)}(p)$ &  $8p^5 - 3p^4 + 3p^3$\\
$g_{(3,2,1,1,1)}(p)$ &  $5p^6 + 6p^5 - 14p^4 + 16p^3$\\
\hline
\end{tabular}.
\caption{{Values of $g_\alpha(p)$ for some $\alpha \in \mathcal{C}'_{6,8}$.}}
\end{table}
\end{thm}
\vspace{-1cm}

We do not give the details of the computations that go into the proof of Theorem \ref{p8_comps}.  They are similar to the computations for Theorem \ref{p7_comps}, but more extensive.

\subsection{$f_n(p^7)$ and $f_n(p^8)$: Details}\label{sec:details}  In this section we give the proofs of Lemma \ref{Lem_beta}, Lemma \ref{Lem_2beta}, and Theorem \ref{p7_comps}.

We first make some observations that will be useful throughout this section.  For the rest of this section, we write $v_1,\ldots, v_n$ for the columns of an $n \times n$ matrix $A$.  We write $\col(A)$ for the set of integer linear combinations of the columns of $A$.  If $A$ is a matrix in Hermite normal form with $\det(A) \neq 0$ then for any $i$ it is clear that $v_1 \circ v_i \in \col(A)$.  If $A$ is an irreducible subring matrix then its last column is $v_n = (1,1,\ldots,1)^T$ and for any $i$ it is clear that $v_i \circ v_n \in \col(A)$.

\begin{proof}[Proof of Lemma \ref{Lem_beta}]
An irreducible subring matrix $A$ that has diagonal $(p^\beta,p,\ldots, p,1)$ is of the form 
\[
\begin{pmatrix}
p^\beta & a_1 p   & a_2 p & \cdots & a_{n-2}p  &  1 \\
0   & p  & 0 & \cdots & 0 & 1 \\
0   & 0    & \ddots   & \cdots  & \vdots   & 1 \\
0   & 0    & 0   & \ddots  & \vdots   & 1 \\
0   & 0    & 0   & 0  & p   & 1 \\
0   & 0    & 0     & 0 & 0 & 1 \\
\end{pmatrix},
\]
where $0 \le a_i \le p^\beta -1$ for each $1\le i \le n-2$.  If $v_i \circ v_i \in \col(A)$, then $p^\beta \mid (a_i^2-a_i) p^2$.  If $v_i \circ v_j \in \col(A)$ for $2 \le i < j \le n-1$, then $p^{\beta} \mid a_i a_j p^2$.  Therefore, irreducible subring matrices with diagonal $(p^\beta,p,\ldots, p,1)$ are in bijection with solutions $(a_1,\ldots, a_{n-2})$ to 
\begin{eqnarray*}
a_i(a_i-1) & \equiv & 0 \pmod{p^{\beta-2}} \\
a_i a_j & \equiv & 0 \pmod{p^{\beta-2}}
\end{eqnarray*}
where we have one congruence of the first type for each $2\le i \le n-1$, and one congruence of the second type for each $2 \le i < j \le n-1$.

When $\beta =2$ any choice of $(a_1,\ldots, a_{n-2})$ gives an irreducible subring matrix.  There are $p$ choices for each $a_i$, completing the proof in this case.  

Suppose that $\beta \ge 3$.  Clearly $a_i(a_i-1)  \equiv  0 \pmod{p^{\beta-2}}$ implies that $a_i  \equiv 0 \pmod{p^{\beta-2}}$ or $a_i-1  \equiv 0 \pmod{p^{\beta-2}}$.  There are $p$ choices for $a_i$ with $a_i \equiv 0 \pmod{p^{\beta-2}}$ and $p$ choices for $a_i$ with $a_i -1 \equiv 0 \pmod{p^{\beta-2}}$.  If $i\neq j$ and $a_i-1 \equiv a_j-1 \equiv 0 \pmod{p^{\beta-2}}$, then $a_i a_j \not\equiv 0 \pmod{p^{\beta-2}}$, and $(a_1,\ldots, a_{n-2})$ does not give an irreducible subring matrix.

We see that $(a_1,\ldots, a_{n-2})$ gives an irreducible subring matrix if and only if either $a_i \equiv 0 \pmod{p^{\beta-2}}$ for each $i$, or there exists a unique $j$ with $a_j -1 \equiv 0 \pmod{p^{\beta-2}}$ and $a_i \equiv 0 \pmod{p^{\beta-2}}$ for every $i\neq j$.  This leads to $p^{n-2} + (n-2) p^{n-2} = (n-1) p^{n-2}$ total irreducible subring matrices.

\end{proof}

The next argument is similar, but more complicated.
\begin{proof}[Proof of Lemma \ref{Lem_2beta}]
An irreducible subring matrix $A$ that has diagonal $(p^2,p,\ldots, p,p^\beta,p,\ldots, p,1)$ where the $p^\beta$ is in the $k$\textsuperscript{th} column is of the form 
\[
\begin{pmatrix}
p^2 & a_1 p   & a_2 p & \cdots & a_{k-1} p & \cdots & \cdots &a_{n-2}p  &  1 \\
0   & p  & 0 & \cdots & 0 & \cdots & \cdots & 0 &  1 \\
0   & 0    & p   & \cdots  & \vdots   & \cdots & \cdots  & \vdots & 1 \\
0   & \cdots    & 0   & \ddots  & \vdots   &  \cdots & \cdots & \vdots & 1 \\
0   & \cdots    & \cdots   & 0  & p^\beta   & b_1 p & \cdots & b_r p &   1 \\
0   & \cdots    & \cdots     & \cdots & 0 & p & \cdots & \cdots  & 1 \\
0   & \cdots    & \cdots     & \cdots & \cdots & 0 & \ddots & \cdots  & 1 \\
0   & \cdots    & \cdots     & \cdots & \cdots & \cdots & 0 & p  & 1 \\
0   & \cdots    & \cdots     & \cdots & \cdots & \cdots &  \cdots & 0  & 1 
\end{pmatrix},
\]
where $0 \le a_i \le p -1$ for each $1\le i \le n-2$, and $0 \le b_j \le p^{\beta}-1$ for each $1 \le j \le r$.  It is easy to see that if $\min(i,j) \le k$, then $v_i \circ v_j \in \col(A)$.

For any $1\le m \le r,\ v_{k+m} \circ v_{k+m} \in \col(A)$ if and only if the following two congruences are satisfied:
\begin{eqnarray*}
b_m(b_{m}-1) & \equiv & 0 \pmod{p^{\beta-2}}, \\
a_{k-1} b_m(b_{m}-1) & \equiv & 0 \pmod{p^{\beta-1}}.
\end{eqnarray*}
For any $1\le i < j \le r$, we have $v_{k+i} \circ v_{k+j} \in \col(A)$ if and only if the following two congruences are satisfied:
\begin{eqnarray*}
b_i b_j & \equiv & 0 \pmod{p^{\beta-2}}, \\
a_{k-1} b_i b_j & \equiv & 0 \pmod{p^{\beta-1}}.
\end{eqnarray*}
We see that $a_1,\ldots, a_{k-2},a_{k},\ldots, a_{n-2}$ do not play a role in these congruences.  Therefore, the number of irreducible subring matrices with diagonal $(p^2,p,\ldots, p,p^\beta,p,\ldots, p,1)$ is equal to $p^{n-3}$ times the number of solutions $(a_{k-1},b_1,\ldots, b_r)$ to this collection of congruences.

We split the count into two pieces.  First suppose that $a_{k-1} \not\equiv 0 \pmod{p}$.  We count solutions to
\begin{eqnarray*}
b_m(b_{m}-1) & \equiv & 0 \pmod{p^{\beta-1}}, \\
b_i b_j & \equiv & 0 \pmod{p^{\beta-1}},
\end{eqnarray*}
where we have one inequality of the first type for each $1 \le m \le r$ and one inequality of the second type for each $1 \le i<j \le r$.  Since $0 \le b_m \le p^{\beta-1}-1$, there is a unique $b_m$ with $b_m \equiv 0 \pmod{p^{\beta-1}}$, and a unique $b_m$ with $b_m-1 \equiv 0 \pmod{p^{\beta-1}}$.  If $i\neq j$ and $b_i-1 \equiv b_j-1 \equiv 0 \pmod{p^{\beta-1}}$, then $b_i b_j \not\equiv 0 \pmod{p^{\beta-1}}$, and we do not get a solution to these congruences.  So, as in the proof of the previous lemma, we get $p^{n-3} (r+1) (p-1)$ irreducible subring matrices with $a_{k-1} \not\equiv 0 \pmod{p}$.

Now suppose that $a_{k-1} \equiv 0 \pmod{p}$.  We count solutions to
\begin{eqnarray*}
b_m(b_{m}-1) & \equiv & 0 \pmod{p^{\beta-2}}, \\
b_i b_j & \equiv & 0 \pmod{p^{\beta-2}},
\end{eqnarray*}
where we have one inequality of the first type for each $1 \le m \le r$ and one inequality of the second type for each $1 \le i<j \le r$. We are now in the exact same setting as in the proof of the previous lemma.  When $\beta = 2$ any of the $p^r$ choices of $(b_1,\ldots, b_r)$ gives a solution to these congruences.  This gives $p^{n-3+r}$ total irreducible subrings matrices.  When $\beta \ge 3$, there are $(r+1) p^r$ solutions $(b_1,\ldots, b_r)$ to these inequalities, which gives $(r+1) p^{n-3+r}$ total irreducible subrings with $a_{k-1} \equiv 0 \pmod{p}$.

\end{proof}

We next prove each of the formulas given in Theorem \ref{p7_comps}.  We give one case in detail and note that the remaining arguments are similar, but significantly easier.

\begin{lemma}\label{lemma3211}
We have $g_{(3, 2, 1, 1)}(p) =7p^4 - 6p^3 + 6p^2$. 
\end{lemma}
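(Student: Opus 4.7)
The matrix $A$ with diagonal $(p^3, p^2, p, p, 1)$ has above-diagonal unknowns $c, a_1, a_2, b_1, b_2$ (at positions $(1,2), (1,3), (1,4), (2,3), (2,4)$), all divisible by $p$ by the irreducibility condition. My plan is to write $c = pc'$, $a_i = pa_i'$ with $0 \le c', a_i' < p^2$, and $b_i = pb_i'$ with $0 \le b_i' < p$, then identify which products $v_i \circ v_j$ impose non-trivial conditions, reduce the remaining constraints to a system of three congruences modulo $p$, and recognize this system as the defining equations of the variety $V$ from the introduction.

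Every $v_1 \circ v_j$ is automatic because the only nonzero entry of $v_1$ is the $p^3$ in row $1$, which is absorbed by the diagonal entry $p^3$. The product $v_2 \circ v_2$ gives the divisibility $p \mid c'^2$, forcing $c' = pt$ for some $t \in \{0, \ldots, p-1\}$; once $c' = pt$, both $v_2 \circ v_3$ and $v_2 \circ v_4$ become automatic by direct check. The three remaining products $v_3 \circ v_3$, $v_4 \circ v_4$, and $v_3 \circ v_4$, after dividing by the appropriate power of $p$, reduce to congruences modulo $p$ in the low bits $x := a_1' \bmod p$, $y := a_2' \bmod p$, $u := b_1'$, $v := b_2'$, and $t$:
\begin{align*}
x(x-1) &\equiv t\, u(u-1) \pmod p, \\
y(y-1) &\equiv t\, v(v-1) \pmod p, \\
xy &\equiv t\, uv \pmod p.
\end{align*}
These are exactly the defining equations of $V$ (with $t$ playing the role of the variable $c'$ in the introduction). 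The high bits $\lfloor a_1'/p \rfloor$ and $\lfloor a_2'/p \rfloor$ do not appear in any condition and contribute a free factor of $p^2$, so $g_{(3,2,1,1)}(p) = p^2 \cdot |V(\F_p)|$.

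It remains to verify $|V(\F_p)| = 7p^2 - 6p + 6$. I would do this by exhibiting the seven irreducible components of $V$ suggested by the Magma computation of the introduction and applying inclusion-exclusion. The components are: the three $t = 0$ components $\{x = y = 0\}$, $\{x = 0, y = 1\}$, $\{x = 1, y = 0\}$; the diagonal component $\{t = 1,\ x = u,\ y = v\}$; the two singular components $\{x = 0,\ u = 0\}$ and $\{y = 0,\ v = 0\}$ (each cut out of $\mathbb{A}^3$ by a residual conic such as $y(y-1) = t\, v(v-1)$); and a twisted component $\{y = 1 - x,\ v = 1 - u,\ x(x-1) = t\, u(u-1)\}$. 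A short stratification by whether $u$ (or $v$) lies in $\{0, 1\}$ shows that the first four components each contribute $p^2$ points and the last three each contribute $p^2 + 2p$, totalling $7p^2 + 6p$. A direct case check shows that exactly twelve of the $\binom{7}{2}$ pairwise intersections are non-empty (each a $p$-point line), that six of the triple intersections reduce to a single point, and that all higher intersections vanish. Inclusion-exclusion then gives
\[
|V(\F_p)| = (7p^2 + 6p) - 12p + 6 = 7p^2 - 6p + 6,
\]
yielding $g_{(3,2,1,1)}(p) = 7p^4 - 6p^3 + 6p^2$. The main obstacle is locating the twisted seventh component: it appears only when one eliminates $y$ from $xy = t\, uv$ using the generic substitution $y = v(x-1)/(u-1)$ and finds the branch $u + v = 1$, whose closure in $\mathbb{A}^5$ is precisely this surface. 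Once all seven components and their pairwise and triple intersection pattern are correctly identified, the remaining bookkeeping is routine.
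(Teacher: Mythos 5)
Your derivation of the three congruences modulo $p$ and the reduction $g_{(3,2,1,1)}(p) = p^2\cdot|V(\F_p)|$ matches the paper exactly, but your method for counting $|V(\F_p)|$ is genuinely different. The paper stratifies the solution set by the values of $c'$, $u$, and $x$ (whether $c'=0$; whether $u,x$ lie in $\{0,1\}$; then further by whether $v=0$ and whether $x=u$), giving a direct tally of $3p^2 + p^2 + 6(p-1) + 3(p-2)^2$. You instead exhibit the seven irreducible components of $V$ and apply inclusion--exclusion. I checked your decomposition: every point of $V$ does lie on one of your seven surfaces (the four affine planes $\{t=0,(x,y)\in\{(0,0),(0,1),(1,0)\}\}$ and $\{t=1,x=u,y=v\}$, plus the three quadric surfaces $\{x=u=0\}$, $\{y=v=0\}$, and the twisted piece $\{y=1-x,\,v=1-u\}$), each quadric surface has $p^2+2p$ points, the intersection graph has exactly $12$ edges each giving a line of $p$ points, and of the $7$ triangles in that graph exactly $6$ give a single common point (the triple $\{x=u=0\}\cap\{y=v=0\}\cap\{y=1-x,\,v=1-u\}$ is empty, which you correctly reflect in the count of $6$). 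The inclusion--exclusion $7p^2+6p-12p+6=7p^2-6p+6$ is right. The tradeoff: the paper's case analysis is elementary and self-contained but opaque, while your component decomposition makes the geometric structure alluded to in the introduction explicit and would scale more naturally if one hoped to treat larger diagonals via the same lens.

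One small point worth flagging: the paper disposes of $p=2$ by direct computation before assuming $p\ge 3$, because its quadratic-discriminant counting (and your $p^2+2p$ count for the three quadric surfaces, if carried out by the same discriminant argument) implicitly uses $p$ odd. The formula $p^2+2p$ happens to remain correct at $p=2$ since the congruence $y(y-1)\equiv tv(v-1)$ degenerates to $0\equiv 0$ there, but you should either note that coincidence or check $p=2$ separately as the paper does.
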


\begin{proof}
This is an easy computation when $p=2$, so for the rest of the proof suppose $p \ge 3$.

An irreducible subring matrix $A$ with diagonal $(p^3,p^2, p, p, 1)$ is of the form 
\[
\begin{pmatrix}
p^3 & cp    & x p & y p & 1 \\
0   & p^2  & u p & v p & 1 \\
0   & 0    & p   & 0   & 1 \\
0   & 0    & 0   & p   & 1 \\
0   & 0    & 0   & 0   & 1 \\
\end{pmatrix},
\]
where $0 \le c,x,y \le p^2-1,\ 0 \le u,v \le p-1$. 

If $v_2 \circ v_2 \in \col(A)$, then 
\[
\begin{pmatrix}
c^2 p^2 \\
p^4
\end{pmatrix}=
p^2\begin{pmatrix}
c p \\
p^2
\end{pmatrix}+
\lambda
\begin{pmatrix}
p^3 \\
0
\end{pmatrix}
\]
for some $\lambda\in\Z$.  This implies $p^3 \mid (c^2 p^2 - cp^3)$, so $p \mid c$. Define $c'$ by $c = p c'$ where $0 \le c' \le  p-1$.

Taking $v_3 \circ v_3$ or $v_4 \circ v_4$ and applying an argument like the one for $v_2\circ v_2$ gives 
\begin{eqnarray}
(x^2-x) - (u^2 - u) c' \equiv & 0 &  \pmod{p}\label{eq1} \\
(y^2-y) - (v^2 - v) c' \equiv & 0 & \pmod{p}\label{eq2}.
\end{eqnarray}

Taking $v_3 \circ v_4$ gives 
\begin{equation}\label{eq3}
xy -uv c' \equiv 0 \pmod{p}.
\end{equation}

These congruences depend only on $x$ and $y$ modulo $p$, rather than their particular values, so any solution $(x,y,u,v,c')$ to these three congruences gives $p^2$ irreducible subring matrices.  Therefore, we need only count solutions to equations \eqref{eq1}, \eqref{eq2}, and \eqref{eq3} for which $0 \le x, y \le p-1$.

If $c' = 0$, then equations \eqref{eq1} and \eqref{eq2} imply that $x,y \in \{0,1\}$.  By equation (\ref{eq3}) we cannot have $x=y=1$.  Any choices of $u$ and $v$ now satisfy these equations.  This gives $3p^2$ choices for $(x,y,u,v,c')$ and $3p^4$ irreducible subring matrices.

For the rest of the proof suppose $c' \neq 0$.  We consider cases based on $u$ and $x$.  Equation \eqref{eq1} implies that $u \in \{0,1\}$ if and only if $x \in \{0,1\}$.
\begin{claim}\label{ClaimA}
Suppose that $c' \neq 0$.  The following table gives the number of solutions to equations \eqref{eq1}, \eqref{eq2}, and \eqref{eq3} with specified values of $u$~and~$x$:
\[
\begin{tabular}{|c | c | c |}
\hline
$u$ & $x$ & Number of Solutions \\
\hline 
$0$ & $0$ & $p^2$ \\
\hline
$1$ & $0$ & $2(p-1)$ \\
\hline
$0$ & $1$ & $2(p-1)$ \\
\hline
$1$ & $1$ & $2(p-1)$ \\
\hline
 $\not\in\{0,1\}$ & $\not\in\{0,1\}$  & $3(p-2)^2$ \\
\hline
\end{tabular}\ .
\]
\end{claim}
\noindent We further divide up the last case of this claim.
\begin{claim}\label{ClaimB}
Suppose that $c' \neq 0$ and $u,x \not\in\{0,1\}$.  There are $(p-2)^2$ solutions with $v = 0$.  When $v \neq 0$ there are $(p-1)(p-2)$ solutions with $x =u$ and $(p-2)(p-3)$ solutions with $x \neq u$.
\end{claim}
Once these claims are established we count
\[
3p^4 + p^2 \left(p^2+6(p-1)+3(p-2)^2\right) = 7p^4-6p^3+6p^2
\]
total irreducible subrings, completing the proof.  

We now prove Claim \ref{ClaimA}.

\noindent \textbf{Case 1: $u=x=0$.}
  
We need only count solutions to equation \eqref{eq2}. If $v \in \{0, 1\}$, then for any of the $p-1$ choices for $c'$ there are $2$ solutions $y$, namely $y \in \{0, 1\}$, for a total of $4(p-1)$ solutions. Suppose $v\not\in\{0,1\}$.  For any $y \not\in \{0,1\}$ a unique value of $c'$ that gives a solution to this equation.  If $y \in \{0,1\}$ then we get no solutions.  Adding these cases together gives $4(p-1) + (p-2)^2 = p^2$ solutions.

\noindent \textbf{Case 2: $u=1,\ x=0$.}

Equation \eqref{eq3} implies $uvc' = 0$, and since $u$ and $c'$ are non-zero, we must have $v = 0$. Equation \eqref{eq2} implies $y \in \{0,1\}$, so accounting for the $p-1$ possible values of $c'$ gives $2(p-1)$ solutions.

\noindent \textbf{Case 3: $u=0,\ x=1$.}

Equation \eqref{eq3} implies $y = 0$. Equation (\ref{eq2}) gives $v \in \{0,1\}$, so accounting for the $p-1$ possible values of $c'$ gives $2(p-1)$ solutions.

\noindent \textbf{Case 4: $u=1,\ x=1$.}

Equation \eqref{eq3} gives $y \equiv v c' \pmod{p}$. Substituting this into equation \eqref{eq2} gives $(c'^2-c')v^2 \equiv 0 \pmod{p}$.  If $c' = 1$ we have $p$ choices for $v$.  If $c'\neq 1$ then $v = 0$.  This gives $p + p-2 = 2(p-1)$ solutions.

For the rest of the proof suppose that $c'\neq 0$ and $x,u \not\in \{0,1\}$.  We consider two further subcases.

\noindent \textbf{Case 5: $v=0$.}

Equation \eqref{eq3} implies $y=0$. Setting $c' = \frac{x^2-x}{u^2-u}$ for any choice of $x, u$ gives a valid solution.  This gives $(p-2)^2$ solutions. 

\noindent \textbf{Case 6: $v\neq0$.}

Equations \eqref{eq1} and \eqref{eq3} imply that
\[
c' \equiv \frac{x(x-1)}{u(u-1)} \equiv \frac{xy}{uv} \pmod{p}.
\]
This implies $v \equiv \frac{y(u-1)}{x-1} \pmod{p}$.  By assumption $v \neq 0$, so equation (\ref{eq3}) implies $y \neq 0$. Substituting this expression for $v$ into equation \eqref{eq2} and dividing by $y$ gives 
\begin{equation}\label{eq4}
y \left(1 - c'\frac{(u-1)^2}{(x-1)^2}\right) + c'\frac{u-1}{x-1}-1 \equiv 0 \pmod{p}.
\end{equation}
We need only count solutions to equation \eqref{eq4}.

Equation \eqref{eq4} is linear in $y$. The coefficient of $y$ is $0$ precisely when $c' \equiv \frac{(x-1)^2}{(u-1)^2} \pmod{p}$. Since $c' \equiv \frac{x(x-1)}{u(u-1)} \pmod{p}$ by equation (\ref{eq1}), this is equivalent to $\frac{x}{u} \equiv  \frac{x-1}{u-1} \pmod{p}$.  This implies $x \equiv u \pmod{p}$.

Suppose that $x = u$.  For any of the $p-2$ possible choices for $x$, any choice of $y$ gives a solution to this equation, except that $y = 0$ implies $v = 0$ by equation \eqref{eq3}, a case we have already considered.  Therefore, this case gives $(p-1)(p-2)$ solutions.

When $x\neq u$, for any of the $(p-2)(p-3)$ choices of $x$ and $u$ there are unique choices of $y$ and $c'$ such that equation \eqref{eq4} holds. This gives $(p-2)(p-3)$ solutions.

This completes the proofs of the two claims, which completes the proof of Lemma \ref{lemma3211}.
\end{proof}

\begin{proof}[Proof of Theorem \ref{p7_comps}]\ \\
\noindent\textbf{Case 1}: $g_{(2,2,2,1)}(p) = 3p^4+2p^3-4p^2$.  We count irreducible subring matrices $A$ of the form
\[
\begin{pmatrix}
p^2 & ap    & bp & cp & 1 \\
0   & p^2  & dp & ep & 1 \\
0   & 0    & p^2   & fp   & 1 \\
0   & 0    & 0   & p   & 1 \\
0   & 0    & 0   & 0   & 1 \\
\end{pmatrix},
\]
where $0\le a,b,c,d,e,f \le p-1$.  Taking pairwise products of columns shows that we get an irreducible subring matrix if and only if
\[
\begin{cases}
ad & \equiv 0 \pmod{p}, \\
d (f^2-f) & \equiv 0 \pmod{p}, \\
b (f^2-f) + a (e^2-e) & \equiv 0 \pmod{p}.
\end{cases}
\]
If $f \in \{0,1\}$, these congruences become $ad \equiv 0 \pmod{p}$ and $a (e^2-e) \equiv0 \pmod{p}$.  If $p\mid a$, then any values of $b,c,d,e$ give an irreducible subring. If $p \nmid a$ we must have $p \mid d$ and $e \in \{0,1\}$.  If $f \not\in \{0,1\}$ then we have $p \mid d$ and our final remaining condition is $b(f^2-f) + a (e^2-e) \equiv 0 \pmod{p}$.  For any choice of $a$ and $e$, there is a unique value of $b$ that satisfies this equation.  Therefore,
\[
g_{(2,2,2,1)}(p) = 2p^4+4(p-1)p^2 + (p-2)p^3 = 3p^4+2p^3-4p^2.
\]

\noindent\textbf{Case 2}: $g_{(2,2,1,2)}(p) = p^4+2p^3 - 2p^2$.  We count irreducible subring matrices $A$ of the form
\[
\begin{pmatrix}
p^2 & ap    & bp & cp & 1 \\
0   & p^2  & dp & ep & 1 \\
0   & 0    & p   & 0   & 1 \\
0   & 0    & 0   & p^2   & 1 \\
0   & 0    & 0   & 0   & 1 \\
\end{pmatrix},
\]
where $0\le a,b,c,d,e\le p-1$.  Taking pairwise products of columns shows that we get an irreducible subring matrix if and only if
\[
\begin{cases}
ae & \equiv 0 \pmod{p}, \\
a(d^2-d) & \equiv 0 \pmod{p}.
\end{cases}
\]
There are $p^4$ solutions with $p \mid a$, and $2p^2 (p-1)$ with $p \nmid a$.

\noindent\textbf{Case 3}: $g_{(2,1,2,2)}(p) = 2p^3-p^2$.  We count irreducible subring matrices $A$ of the form
\[
\begin{pmatrix}
p^2 & ap    & bp & cp & 1 \\
0   & p  & 0 & 0 & 1 \\
0   & 0    & p^2   & dp   & 1 \\
0   & 0    & 0   & p^2   & 1 \\
0   & 0    & 0   & 0   & 1 \\
\end{pmatrix},
\]
where $0\le a,b,c,d\le p-1$.  Taking pairwise products of columns shows that we get an irreducible subring matrix if and only if $bd \equiv 0\pmod{p}$.  There are $2p-1$ choices for the pair $(b,d)$ and any such pair gives $p^2$ irreducible subrings.

\noindent\textbf{Case 4}: $g_{(3,1,2,1)}(p) = p^4+5 p^3-2p^2$.  We count irreducible subring matrices $A$ of the form
\[
\begin{pmatrix}
p^3 & ap    & bp & cp & 1 \\
0   & p  & 0 & 0 & 1 \\
0   & 0    & p^2   & dp   & 1 \\
0   & 0    & 0   & p   & 1 \\
0   & 0    & 0   & 0   & 1 \\
\end{pmatrix},
\]
where $0\le a,b,c\le p^2-1$ and $0 \le d \le p-1$.   If $v_3 \circ v_3\in \col(A)$, then $p \mid b$.  Let $bp = b' p^2$ where $0 \le b' \le p-1$.  Taking pairwise products of columns shows that we get an irreducible subring matrix if and only if 
\[
\begin{cases}
a^2-a &\equiv 0\pmod{p}, \\
ac &\equiv 0\pmod{p},\\
(c^2-c) - b'(d^2-d)&\equiv 0\pmod{p}.
\end{cases}
\]
These congruences only depend on $a,c$ modulo $p$, so we count solutions modulo $p$ and then multiply by $p^2$.  First suppose that $p \mid a$.  It is easy to see that there are $2(3p-2) + (p-2)^2$ solutions to $(c^2-c) - b'(d^2-d) \equiv 0 \pmod{p}$.  If $p \nmid a$ then $p \mid c$ and we need only note that there are $3p-2$ solutions to $b'(d^2-d) \equiv 0 \pmod{p}$.  Combining these observations completes the proof.

\noindent\textbf{Case 5}: $g_{(3,1,1,2)}(p) = 3p^3$.  We count irreducible subring matrices $A$ of the form
\[
\begin{pmatrix}
p^3 & ap    & bp & cp & 1 \\
0   & p  & 0 & 0 & 1 \\
0   & 0    & p   & 0   & 1 \\
0   & 0    & 0   & p^2   & 1 \\
0   & 0    & 0   & 0   & 1 \\
\end{pmatrix},
\]
where $0\le a,b,c\le p^2-1$.  If $v_4 \circ v_4 \in \col(A)$, then $p \mid c$.  Let $cp = c' p^2$ where $0 \le c' \le p-1$.  Taking pairwise products of columns shows that we get an irreducible subring matrix if and only if
\[
\begin{cases}
a^2-a &\equiv 0\pmod{p}, \\
b^2-b &\equiv 0\pmod{p}, \\
ab &\equiv 0\pmod{p}.
\end{cases}
\]
These congruences are satisfied if and only if $(a,b) \pmod{p} \in \{(0,0),(0,1),(1,0)\}$.  This gives $3p^3$ total irreducible subrings.

\end{proof}

We do not give details for the $g_\alpha(p)$ computations of Theorem \ref{p8_comps}.  They are similar in spirit to the computations of this section but the details are significantly more extensive.

\section{Lower Bounds on $g_n(p^e)$}\label{sec_lower_bounds}

We now give a lower bound on $g_n(p^e)$ when $n-1 \le e \le 2(n-1)$.  We do this by giving a lower bound on $g_{\alpha}(p^e)$ for a particular composition $\alpha$ of $e$ of length $n-1$.  These lower bounds on $g_n(p^e)$ together with Proposition \ref{fnrecurrence} give lower bounds on $f_n(p^e)$.

\begin{prop}\label{lowerbound}
Let $\alpha = (2,\ldots, 2,1,\ldots, 1)$ be a composition of length $n-1$ with $r$ entries equal to $2$ and $s$ entries equal to $1$.  Then $g_{\alpha}(p) \ge p^{rs}$.
\end{prop}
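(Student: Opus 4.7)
My plan is to exhibit an explicit family of $p^{rs}$ distinct subring matrices with diagonal $(p^2,\ldots,p^2,p,\ldots,p,1)$ and show that every matrix in this family automatically satisfies the multiplicative closure condition. Label the columns $v_1,\ldots,v_{n-1},v_n$, where $v_1,\ldots,v_r$ correspond to the diagonal entries $p^2$, $v_{r+1},\ldots,v_{n-1}$ to the diagonal entries $p$, and $v_n=(1,\ldots,1)^T$. Consider the subfamily of Hermite normal form matrices in which we set to zero every above-diagonal entry $a_{ij}$ (with $j<n$) except those with $i\le r$ and $r+1\le j\le n-1$; on this $r\times s$ rectangular block of positions we let each entry range freely over $\{0,p,2p,\ldots,(p-1)p\}$, which is forced by the irreducibility condition $p\mid a_{ij}$ together with $0\le a_{ij}<p^2=a_{ii}$. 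This gives precisely $p^{rs}$ matrices.

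Concretely, the columns take the form $v_i=p^2 e_i$ for $i\le r$ and $v_j=pe_j+\sum_{i=1}^r p\lambda_{ij}e_i$ for $r+1\le j\le n-1$, where $\lambda_{ij}\in\{0,1,\ldots,p-1\}$. The second step is to verify multiplicative closure case by case. Products $v_i\circ v_n=v_i$ are trivially in the lattice. For $i,i'\le r$ one has $v_i\circ v_{i'}=0$ or $p^2 v_i$. For $i\le r<j$, one checks $v_i\circ v_j = p\lambda_{ij}v_i$. For $r<j\ne k\le n-1$ one obtains $v_j\circ v_k=p^2\sum_{l\le r}\lambda_{lj}\lambda_{lk}\,e_l=\sum_l\lambda_{lj}\lambda_{lk}v_l$. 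The only case requiring a small rearrangement is $v_j\circ v_j$ for $r<j\le n-1$, which gives $p^2 e_j+p^2\sum_l\lambda_{lj}^2 e_l$; subtracting $pv_j$ leaves $\sum_l(\lambda_{lj}^2-\lambda_{lj})v_l$, which lies in the lattice.

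Since every such matrix is a valid (irreducible) subring matrix with the prescribed diagonal, and distinct choices of $(\lambda_{ij})$ give distinct matrices, we conclude $g_\alpha(p)\ge p^{rs}$. The only real check is the multiplicative closure verification in the last case, but this is a short direct computation rather than a genuine obstacle; the main conceptual point is simply recognizing that restricting the free entries to the $r\times s$ block already makes the products lie in the column span of the $v_i$ ($i\le r$), because these products are supported on the coordinates $e_1,\ldots,e_r$ with the required $p^2$ divisibility.
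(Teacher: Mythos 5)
Your proof is correct and takes the same approach as the paper: you exhibit the same family of $p^{rs}$ Hermite-normal-form matrices whose only nonzero off-diagonal entries (other than the identity column) lie in the $r\times s$ block with row index $\le r$ and column index in $\{r+1,\ldots,n-1\}$. The only difference is that you write out the multiplicative-closure verification case by case, whereas the paper leaves it as "easy to check"; your computations are correct.
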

\noindent Note that $r+s = n-1$ and $2r + s = e$.  Solving for $r$ and $s$ in terms of $n$ and $e$ gives $(r,s) = \left(e-(n-1), 2(n-1)-e\right)$.

\begin{proof}
Let $A$ be an upper triangular matrix with columns $v_1,\ldots, v_n$ where the diagonal entry of columns $v_1,\ldots, v_r$ is $p^2$, the diagonal entry of columns $v_{r+1},\ldots, v_{r+s}$ is $p$, and the final column is $(1,\ldots, 1)^T$.  Suppose that every non-diagonal entry in the first $n-1$ columns of this matrix is zero except possibly in the first $r$ rows of columns $v_{r+1},\ldots, v_{r+s}$.  In each of these $rs$ entries there are $p$ integers $a_{i,j}$ satisfying $0 \le a_{i,j} \le p^2-1$ and $a_{i,j} \equiv 0 \pmod{p}$.  This gives $p^{rs}$ total matrices. It is easy to check that each one is an irreducible subring matrix.
\end{proof}
The proof of this proposition comes from producing a set of irreducible subring matrices with diagonal $(p^2,\ldots, p^2, p,\ldots, p, 1)$ that have many entries equal to $0$.  Not all irreducible subring matrices with this diagonal have all of these entries equal to $0$, so the lower bound of Proposition \ref{lowerbound} is not actually equal to $g_{\alpha}(p)$. For example, Lemma \ref{Lem_2beta} implies that $g_{(2, 2, 1, 1)}(p) = p^4 + 3p^2(p-1)$, larger than the lower bound of $p^4$ from Proposition \ref{lowerbound}.

Proposition \ref{lowerbound} gives a lower bound on $g_n(p^e)$ for every pair $(n,e)$ with $n-1 \le e \le 2(n-1)$.  We now determine for a fixed value of $e$, which $n$ gives the largest lower bound.  That is, for fixed $e$ we want the non-negative integer $n$ maximizing the function $h(e,n) = (e-(n-1))(2(n-1)-e)$.  It is easy to check that $h(e,n+1) \ge h(e,n)$ if and only if $n \le \frac{3e+2}{4}$.  This gives the following lower bound.

\begin{cor}\label{Cor1}
Let $e$ be a positive integer.  We have
\[
\max_n g_n(p^e) \ge
\begin{cases} 
p^{\frac{e^2}{8}} & \text{if } e \equiv 0 \pmod{4} \\
p^{\frac{1}{8}(e^2-1)} & \text{if } e \equiv 1 \pmod{4} \\
p^{\frac{1}{8}(e^2-4)} & \text{if } e \equiv 2 \pmod{4} \\
p^{\frac{1}{8}(e^2-1)} & \text{if } e \equiv 3 \pmod{4}
\end{cases}.
\]
\end{cor}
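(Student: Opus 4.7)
The plan is to apply Proposition~\ref{lowerbound} with the choice of $n$ that maximizes the exponent $rs$ appearing in that bound, and then to evaluate this maximum in closed form according to the residue of $e$ modulo $4$.

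First I would substitute $r=e-(n-1)$ and $s=2(n-1)-e$ from Proposition~\ref{lowerbound} and set $m=n-1$, reducing the question to maximizing the concave quadratic
\[
\varphi(m) := (e-m)(2m-e) = -2m^{2} + 3em - e^{2}
\]
over positive integers $m$ with $e/2 < m < e$. Its real-valued maximum occurs at $m^{\ast}=3e/4$, where $\varphi(m^{\ast}) = e^{2}/8$, so the optimal integer $m$ is a nearest integer to $3e/4$, giving $n=m+1$ equal to a nearest integer to $(3e+4)/4$, exactly as indicated in the paragraph preceding the corollary.

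Next I would verify the four residue cases by writing $e=4k+j$ with $j\in\{0,1,2,3\}$ and substituting the optimal $m$ into $\varphi$. For $j=0$, $m=3k$ gives $rs = 2k^{2} = e^{2}/8$; for $j=1$, $m=3k+1$ gives $rs=k(2k+1)=(e^{2}-1)/8$; for $j=2$, both $m=3k+1$ and $m=3k+2$ give $rs=(k+1)(2k)=(e^{2}-4)/8$; and for $j=3$, $m=3k+2$ gives $rs=(k+1)(2k+1)=(e^{2}-1)/8$. Plugging each of these into the bound $g_{n}(p^{e}) \ge p^{rs}$ provided by Proposition~\ref{lowerbound} for the composition $\alpha = (2,\dots,2,1,\dots,1)$ with $r$ twos and $s$ ones yields precisely the four cases of the corollary.

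No new idea beyond Proposition~\ref{lowerbound} is required; the argument is the standard integer optimization of a concave quadratic, and the only minor obstacle is the bookkeeping of the four residue classes of $e\pmod 4$, which amounts to a direct substitution and simplification.
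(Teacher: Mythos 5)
Your proposal is correct and follows exactly the route the paper intends: the paragraph before Corollary~\ref{Cor1} reduces the claim to maximizing $rs=(e-(n-1))(2(n-1)-e)$ over integers $n$ near $(3e+4)/4$, and your case analysis by $e\pmod 4$ fills in the same routine verification. No gaps.
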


In Section \ref{sec3} we saw that for each $e \le 8$ and each $n,\ g_n(p^e)$ was given by a polynomial in $p$.  For each $e$, the maximum over all $n$ of the degree of this polynomial is equal to the degree of the monomial on the right hand side of the expression given in Corollary \ref{Cor1}.  For example, 
\[
g_7(p^8) = p^8 + p^7 + 2p^6 + 23p^5 + 3p^4 + 2p^3 + 2p^2 + p + 1,
\]
a polynomial of degree $\frac{8^2}{8} =8$.  It is unclear whether for larger values of $e$ these lower bounds will continue to grow at a rate similar to the growth of $\max_n g_n(p^e)$. 

The lower bounds of this section are related to Brakenhoff's lower bounds for orders of bounded index in the ring of integers of a number field.
\begin{prop}\cite[Lemma 5.10]{Brakenhoff}\label{BrakenhoffLower}
Let $\mathcal{O}_K$ be the ring of integers of a number field $K$.  Every additive subgroup $G$ of $\mathcal{O}_K$ that satisfies $\Z + m^2 \mathcal{O}_K \subset G \subset \Z + m \mathcal{O}_K$ for some integer $m$ is a subring.
\end{prop}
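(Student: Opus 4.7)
The goal is to verify the two defining conditions of a subring: that $1 \in G$ and that $G$ is closed under multiplication. The containment $1 \in G$ is immediate, since $\Z \subset \Z + m^2\mathcal{O}_K \subset G$. The work is to establish multiplicative closure, using only the sandwich $\Z + m^2\mathcal{O}_K \subset G \subset \Z + m\mathcal{O}_K$ together with the fact that $G$ is an additive subgroup (hence a $\Z$-module).

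Given $x, y \in G$, I would use the upper containment to write
\[
x = a + m\alpha, \qquad y = b + m\beta,
\]
with $a, b \in \Z$ and $\alpha, \beta \in \mathcal{O}_K$. Expanding gives
\[
xy = ab + m(a\beta + b\alpha) + m^2 \alpha\beta.
\]
The outer terms are easy: $ab \in \Z \subset G$ and $m^2\alpha\beta \in m^2\mathcal{O}_K \subset G$, so both lie in $G$ by the lower containment. The middle term $m(a\beta + b\alpha)$ is the key obstacle, since $\alpha$ and $\beta$ are elements of $\mathcal{O}_K$ about which we have no direct information beyond $m\alpha = x - a$ and $m\beta = y - b$.

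This last observation is exactly what resolves the difficulty. Substituting $m\beta = y - b$ and $m\alpha = x - a$ yields
\[
m(a\beta + b\alpha) = a(y - b) + b(x - a) = ay + bx - 2ab.
\]
Now $a, b \in \Z$ and $x, y \in G$, and since $G$ is an additive subgroup it is automatically a $\Z$-module, so $ay, bx \in G$; together with $2ab \in \Z \subset G$, this places the middle term in $G$. Adding the three pieces shows $xy \in G$, completing the proof. The main (and only) conceptual step is recognizing that one should re-express $m\alpha$ and $m\beta$ as elements of $G$ rather than as generic elements of $m\mathcal{O}_K$; once that substitution is made, everything follows from the additive $\Z$-module structure of $G$ and the lower containment $\Z + m^2\mathcal{O}_K \subset G$.
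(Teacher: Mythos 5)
Your proof is correct. The paper does not reproduce Brakenhoff's argument (it cites \cite[Lemma 5.10]{Brakenhoff} without proof), but your substitution $m\alpha = x-a$, $m\beta = y-b$, which turns the cross term into $ay + bx - 2ab \in G$ using only the $\Z$-module structure and the lower containment, is the natural and complete argument; equivalently, one notes $(x-a)(y-b) = m^2\alpha\beta \in m^2\mathcal{O}_K \subset G$ and expands, which is the same identity.
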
  
The subrings $R$ described in the proof of Proposition \ref{lowerbound} do satisfy $\Z + p^2 \Z^n \subset R \subset \Z+ p \Z^n$, where the first $\Z$ is interpreted as integer multiples of the multiplicative identity $(1,\ldots, 1)$.  Brakenhoff gives a lower bound for the number of additive subgroups satisfying the hypothesis of Proposition \ref{BrakenhoffLower} and derives a lower bound for the number of orders of index at most $X$ in the ring of integers of a degree $n$ number field $K$.  This requires an easy optimization along the lines of Corollary \ref{Cor1}.  In this way, our lower bounds for $g_n(p^e)$ are analogous to the lower bounds from \cite[Theorem 5.1]{Brakenhoff}. 

\section{Further Questions}\label{conjectures}

\subsection{Uniformity of $\zeta_{\Z^n}^R(s)$ and Varieties over Finite Fields}
Questions \ref{zetaznuniform} and \ref{fnpolyq} are about how counting functions vary with $p$.  Theorem \ref{GSSthm} gives  information on how $f_n(p^e)$ behaves for fixed $n$ and $p$.

We recall a theorem of du Sautoy and Grunewald \cite{duSautoyGrunewald}, about the behavior of local factors of zeta functions of rings as we vary the prime.  This result follows from the machinery of \cite{duSautoyGrunewald}, suitably modified for the requirement that subrings must contain the multiplicative identity.  We use the notation of \cite[Theorem A]{Voll2}.

\begin{thm}\label{dSGthm}
Let $L$ be a ring of additive rank $n$ containing a multiplicative identity.  Then there are smooth projective varieties $V_t,\ t\in \{1,\ldots, m\}$, defined over $\Q$, and rational functions $W_t(X,Y) \in \Q(X,Y)$ such that for almost all primes $p$ the following holds:\\
Denoting by $b_t(p)$ the number of $\F_p$-rational points of $\overline{V_t}$, the reduction $\mod p$ of $V_t$, we have
\[
\zeta^R_{L,p}(s) = \sum_{t=1}^m b_t(p) W_t(p,p^{-s}).
\]
\end{thm}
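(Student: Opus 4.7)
The plan is to realize $\zeta^R_{L,p}(s)$ as a Denef--Igusa type $p$-adic integral and then evaluate it via resolution of singularities, following the cone-integral strategy that underlies Theorem~\ref{GSSthm}. The output will be a finite combinatorial formula whose $p$-dependence enters only through point-counts of a handful of fixed varieties.

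First, I would parametrize finite-index $\Z_p$-subalgebras of $L \otimes \Z_p$ by upper-triangular matrices in Hermite normal form (extending the bijection of Proposition~\ref{prop:bijection-between-rings-and-matrices} to the $p$-adic setting) and translate the multiplicative-closure conditions $v_i \circ v_j \in \mathrm{span}(v_1,\ldots,v_n)$ into polynomial equations $f_1,\ldots,f_r$ on the matrix entries, with an additional polynomial $f_0$ tracking the determinant. This recasts $\zeta^R_{L,p}(s)$ as a cone integral
\[
\zeta^R_{L,p}(s) \;=\; (1-p^{-1})^{-n}\int_{\mathcal{M}} |f_0(\mathbf{x})|_p^{s-n} \, \prod_{j=1}^r \chi_j(\mathbf{x}) \, d\mu(\mathbf{x}),
\]
where $\mathcal{M} \subset \Z_p^N$ is an explicit $p$-adic domain and the $\chi_j$ are characteristic functions of the divisibility conditions enforcing multiplicative closure.

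Next, I would invoke Hironaka's resolution of singularities, applied over $\Q$ to the ideal generated by $f_0$ together with the $f_j$ and the equations of the facets of $\mathcal{M}$, producing a proper birational morphism $h : Y \to \mathbb{A}^N_\Q$ with $Y$ smooth and the preimage of the relevant zero locus a simple normal crossings divisor with irreducible components $E_1,\ldots,E_m$. For any prime $p$ of good reduction---that is, for all but finitely many $p$---Denef's explicit formula evaluates the integral stratum by stratum, yielding
\[
\zeta^R_{L,p}(s) \;=\; \sum_{I \subseteq \{1,\ldots,m\}} \# \overline{E_I^{\circ}}(\F_p) \cdot R_I(p, p^{-s}),
\]
where $E_I^{\circ} = \bigcap_{t \in I} E_t \setminus \bigcup_{t \notin I} E_t$ and $R_I \in \Q(X,Y)$ is an elementary rational function produced by summing geometric series along each coordinate of a simple normal crossings chart.

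Finally, a standard inclusion-exclusion rewrites each $\#\overline{E_I^{\circ}}(\F_p)$ as a $\Z$-linear combination of counts $b_t(p) = \#\overline{V_t}(\F_p)$ for smooth projective varieties $V_t$ obtained (after further blowing-up if necessary) as compactifications of the $E_I^{\circ}$, with the combinatorial signs absorbed into the rational functions; this produces the finitely many pairs $(V_t, W_t)$ demanded by the statement. The principal obstacle is the resolution-of-singularities step combined with uniformity in $p$: one must verify that a single resolution defined over $\Q$ reduces modulo $p$ to a genuine resolution for almost every prime, that Denef's formula applies uniformly on this cofinite set, and that the passage from locally closed strata to a \emph{fixed} family of smooth projective varieties $V_t$---independent of $p$---can be carried out. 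Once this uniform good-reduction control is established, the shape of the formula is essentially forced, and the estimates of Lang--Weil ensure that each $b_t(p)$ varies with $p$ in the expected algebraic way.
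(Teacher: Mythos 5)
The paper does not prove this statement; it is quoted from the literature (du~Sautoy--Grunewald \cite{duSautoyGrunewald}, restated following \cite{Voll2}), so there is no ``paper proof'' to compare against. Your sketch correctly reproduces the standard cone-integral strategy that the cited references use: Hermite-normal-form parametrization of finite-index $\Z_p$-subalgebras, translation of multiplicative closure into polynomial divisibility conditions, expression of $\zeta^R_{L,p}(s)$ as a $p$-adic cone integral, resolution of singularities over $\Q$, Denef's stratum-by-stratum evaluation for primes of good reduction, and an inclusion--exclusion step producing the point-counts $b_t(p)$. You also correctly single out uniform good reduction as the load-bearing technical issue.

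Two small imprecisions are worth flagging. First, the passage from locally closed strata $E_I^\circ$ to smooth \emph{projective} varieties is handled differently in the source than you describe: one does not compactify each $E_I^\circ$ by further blow-up. Instead, one first ensures the resolution $Y$ is itself projective (resolving a projective model of $\mathbb{A}^N_\Q$, not $\mathbb{A}^N_\Q$ bare, so that the components $E_t$ and all intersections $E_J = \bigcap_{t\in J}E_t$ are already smooth projective), and then the identity $\#E_I^\circ(\F_p)=\sum_{J\supseteq I}(-1)^{|J|-|I|}\#E_J(\F_p)$ converts the formula into one indexed by the $E_J$ directly, with signs absorbed into the rational functions $W_t$. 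Without projectivity of $Y$ built in from the start, ``compactify $E_I^\circ$'' is not a step one can take independently of the rest of the construction and still retain Denef's formula. Second, Lang--Weil plays no role in establishing the stated identity; it enters only in the subsequent analytic-continuation and pole-analysis arguments of \cite{duSautoyGrunewald}, so invoking it at the end of the derivation of the formula is a red herring.
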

Not much is known about the types of varieties that can appear in these zeta functions as we vary over different rings.  See the paper of du Sautoy \cite{duSautoyDenom} and Voll's survey \cite[Section 2.1]{Voll} for more information.

In case $\zeta_{\Z^n}^R(s)$ is not uniform it would be interesting to see what kinds of varieties arise in the formulas of Theorem \ref{dSGthm}.  The conditions for the columns of an $n\times n$ matrix to generate a multiplicatively closed sublattice of $\Z^n$ define many equations in the matrix entries.  For examples for $n= 4$ and $5$, see \cite[Lemmas 12 and 13]{KMTB}.  It is possible that once $n$ and $e$ are large enough, varieties $V_t$ occur for which the functions $b_t(p)$ in Theorem \ref{dSGthm} are not polynomials in $p$, and that these functions occur in formulas for $f_n(p^e)$.

\subsection{Coefficients of $f_n(p^e)$ and $g_n(p^e)$}

For small fixed values of $n$ and $e$, the function $g_n(p^e)$ is a polynomial in $p$ with non-negative coefficients. However, this is not true for $g_5(p^8) = p^5 + 77p^4 - 13p^3 + 12p^2 + p + 1$.  As far as we know, there has been no previous study of the positivity of coefficients of $g_n(p^e)$ or $f_n(p^e)$.  These questions are motivated by analogous work related to Hall polynomials.  

\begin{definition}
\begin{enumerate}[wide, labelwidth=!, labelindent=0pt]  
\item Let $\lambda = (\lambda_1, \ldots, \lambda_k)$, where $\lambda_1 \ge \cdots \ge \lambda_k > 0$.  A finite abelian $p$-group $G$ is of \emph{type $\lambda$} if 
\[
G \cong \Z/p^{\lambda_1} \Z \times \cdots \times \Z/p^{\lambda_k} \Z.
\]

\item A subgroup of $H$ of $G$ is of cotype $\nu$ if $G/H$ is of type $\nu$. 

\item Let $g_{\mu\nu}^{\lambda}(p)$ be the number of subgroups $H$ of a finite abelian $p$-group $G$ of type $\lambda$ such that $H$ has type $\mu$ and cotype $\nu$.  
\end{enumerate}
\end{definition}
Hall proved that $g_{\mu\nu}^{\lambda}(p)$ is a polynomial in $p$ with integer coefficients.  Several other authors have studied these coefficients.  For example, Butler and Hales give a characterization of types~$\lambda$ for which all of the associated Hall polynomials have non-negative coefficients \cite{ButlerHales}.  

Maley shows that the expansion of any $g_{\mu\nu}^{\lambda}(p)$ in terms of powers of $p-1$ has non-negative coefficients \cite{Maley}.  In all cases we have computed, the same property holds for $g_n(p^e)$.  This is stronger than the observation that $g_n(1)$ is a non-negative integer.  

\begin{question}
When $g_n(p^e)$ is expanded in terms of powers of $p-1$, are the coefficients positive?
\end{question}
For an example of non-negativity questions like this for zeta functions associated to graphs, see the recent work of Rossmann and Voll \cite[Section 1.9]{RossmannVoll}.  Evseev has studied the substitution $p = 1$ in the form of the \emph{reduced zeta function} \cite{Evseev}.  The $p \to 1$ behavior of local factors of zeta functions is related to the corresponding topological zeta function \cite{Rossmann2}.  It would be interesting to undertake a more detailed study of the coefficients of $f_n(p^e)$ and $g_n(p^e)$.  For more background on Hall polynomials and connections to counting subgroups of finite abelian groups, see the books of Macdonald \cite{Macdonald} and Butler \cite{ButlerBook}.

\section*{Acknowledgements}

We thank the mathematics department at Yale University and the Summer Undergraduate Research at Yale (SUMRY) program for providing the opportunity to conduct this research. SUMRY is supported in part by NSF grant CAREER DMS-1149054. The second author was supported by NSF Grant DMS 1802281, NSA Young Investigator Grant H98230-16-10305 and an AMS-Simons Travel Grant.

We thank Franco Williams, for his active involvement throughout this project and for many helpful conversations. We thank Christopher Voll and Tobias Rossmann for many extremely helpful comments.  We would also like to extend our gratitude to Sam Payne, Sam Kimport, and Jos\'e Gonz\'alez for helping to organize SUMRY. The second author thanks Kelly Isham and Robert Lemke Oliver for helpful conversations and computational assistance.  We thank the referee for many helpful comments that greatly improved the paper.  Lastly, we thank the Yale Center for Research Computing for High Performance Computing resources.

\end{document}